\newtheorem{theorem}{Theorem}[section]
\newtheorem{lemma}[theorem]{Lemma}
\newtheorem{claim}[theorem]{Claim}
\numberwithin{equation}{section}
\DeclareMathOperator{\diam}{diam}
\title[]{Curves between Lipschitz and $C^1$ and their relation to geometric knot theory}
\author{
Simon Blatt}
\address[Simon Blatt]{Paris Lodron Universit\"at Salzburg, Hellbrunner Strasse 34, 5020 Salzburg, Austria}
\email{simon.blatt@sbg.ac.at}
\date{\today}
\begin{document}


\begin{abstract}
 In this article we investigate regular curves whose derivatives have vanishing mean oscillations. We show that smoothing these curves using a standard mollifier one gets regular curves again.
 
 We apply this result to solve a couple of open problems. We show that curves with finite M\"obius energy can be approximated by smooth curves in the energy space $W^{\frac 32,2}$ such that the energy converges which answers a question of He. Furthermore, we extend the result of Scholtes on the $\Gamma$-convergence of the discrete M\"obius energies towards the M\"obius energy and prove conjectures of Ishizeki and Nagasawa on certain parts of a decomposition of the M\"obius energy. Finally, we extend a theorem of Wu on inscribed polygons to curves with derivatives with vanishing mean oscillation.
\end{abstract}

\maketitle

\tableofcontents

\section{Introduction}

Approximating functions by functions with better regularity properties was, is, and will certainly remain to be one of the most important techniques in analysis. In this short note we want to contribute to this topic. We consider regular closed curves with regularity somewhere between $C^1$ and merely Lipschitz continuity. One ends up looking at such curves, if one assumes that the curve is parametrized by arc-length and lies in some critical fractional Sobolev space $W^{1+s,\frac 1 s}$, $s \in (0,1)$ - which is known not to embed into $C^1$. But still the fact that the curve is of class $W^{1+s, \frac 1 s} $ gives us some subtle new information on the derivative that we will use in this article. For example, the derivative of the curve $\gamma:\mathbb R / \mathbb Z \rightarrow \mathbb R^n$ then belongs to the space $VMO(\mathbb R / \mathbb Z, \mathbb R^n)$ of all functions with vanishing mean oscillation, i.e.
\begin{equation*}
 \lim_{r\rightarrow 0} \sup_{x \in \mathbb R / \mathbb Z} \frac 1 {2r} \int_{B_r(x)} |\gamma'(y) - \overline{\gamma'}_{B_r(x)}| dy =0.
\end{equation*}
Here, $\overline{\gamma'}_{B_r(x)}$ denotes the integral mean of the function $\gamma'$ over the ball $B_r(x).$
Let $\eta \in C^\infty(\mathbb R, [0, \infty))$ be such that $\eta \equiv 0$ on $\mathbb R \setminus (-1,1)$ and $\int_{\mathbb R} \eta(x) dx =1$. For $\varepsilon >0$ we consider the smoothing kernels 
$$\eta_\varepsilon (x) = \frac 1 \varepsilon \eta ( \frac x \varepsilon).$$ 
Though for merely regular curves $\gamma \in C^{0,1}(\mathbb R / \mathbb Z, \mathbb R^n)$ we cannot expect that the smoothened functions $\gamma_\varepsilon$ are regular curves, the situation changes drastically, if we assume that $\gamma'$ has vanishing mean oscillation.
We will start with proving the following surprising theorem:

\begin{theorem} \label{thm:ApproximatingVMOCurves}
 Let $\gamma \in C^{0,1}(\mathbb R / \mathbb Z, \mathbb R^n)$ be a curve parametrized by arc-length with $\gamma'\in VMO (\mathbb R / \mathbb Z, \mathbb R^n)$. For $\varepsilon >0$ we consider the smoothened functions $\gamma_\varepsilon = \gamma \ast \eta_\varepsilon$. Then the absolute value of the derivative $|\gamma_\varepsilon'|$ converges uniformly to $|\gamma'| = 1$. So especially, the curves $\gamma_\varepsilon$ are regular if $\varepsilon$ is small enough.
\end{theorem}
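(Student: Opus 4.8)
The plan is to work directly with the derivative $\gamma_\varepsilon' = \gamma' \ast \eta_\varepsilon$, which is the $\eta_\varepsilon$-weighted average of the unit-length vector field $\gamma'$. Writing $m(x) := \gamma_\varepsilon'(x) = \int_{\mathbb R} \gamma'(x-y)\eta_\varepsilon(y)\,dy$, the trivial estimate $|m(x)| \le \int_{\mathbb R} |\gamma'(x-y)|\eta_\varepsilon(y)\,dy = 1$ already yields the upper bound. The entire task is therefore to control $1 - |m(x)|$ from above, uniformly in $x$.

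The key observation I would use is an exact variance identity. Since $|\gamma'| = 1$ almost everywhere and $\int_{\mathbb R}\eta_\varepsilon = 1$, expanding the square gives
\begin{equation*}
 \int_{\mathbb R} |\gamma'(x-y) - m(x)|^2\, \eta_\varepsilon(y)\,dy = 1 - 2|m(x)|^2 + |m(x)|^2 = 1 - |m(x)|^2 .
\end{equation*}
This reduces the theorem to showing that the weighted $L^2$-oscillation of $\gamma'$ at scale $\varepsilon$ tends to zero uniformly.

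To bring in the $VMO$ hypothesis, which only controls the $L^1$-oscillation, I would combine two elementary facts. First, the weighted mean $m(x)$ minimizes $c \mapsto \int_{\mathbb R} |\gamma'(x-y) - c|^2\eta_\varepsilon(y)\,dy$, so it may be replaced by the plain ball average $\overline{\gamma'}_{B_\varepsilon(x)}$ without increasing the integral. Second, both $\gamma'$ and this average have norm at most $1$, so $|\gamma'(x-y) - \overline{\gamma'}_{B_\varepsilon(x)}|^2 \le 2\,|\gamma'(x-y) - \overline{\gamma'}_{B_\varepsilon(x)}|$, converting the $L^2$-oscillation into an $L^1$-oscillation. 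Using $\eta_\varepsilon \le \|\eta\|_\infty/\varepsilon$ together with $\supp \eta_\varepsilon \subset (-\varepsilon,\varepsilon)$ and a change of variables, this yields
\begin{equation*}
 1 - |m(x)|^2 \le 2\,\frac{\|\eta\|_\infty}{\varepsilon}\int_{B_\varepsilon(x)} |\gamma'(z) - \overline{\gamma'}_{B_\varepsilon(x)}|\,dz = 4\|\eta\|_\infty \cdot \frac{1}{2\varepsilon}\int_{B_\varepsilon(x)} |\gamma'(z) - \overline{\gamma'}_{B_\varepsilon(x)}|\,dz .
\end{equation*}
By the definition of $VMO$ the right-hand side tends to $0$ uniformly in $x$ as $\varepsilon \to 0$. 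Since $|m(x)| \le 1$ implies $1 - |m(x)| \le 1 - |m(x)|^2$, the uniform convergence $|\gamma_\varepsilon'| \to 1$ follows, and in particular $|\gamma_\varepsilon'| > 0$ for $\varepsilon$ small, so the $\gamma_\varepsilon$ are regular.

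The only genuinely delicate point is the passage from the $L^2$-type variance identity to the $L^1$-type $VMO$ modulus: everything hinges on $\gamma'$ being bounded, so that the two oscillations are comparable, and on the minimizing property of the mollified mean, which lets one compare it to the ball average figuring in the definition of vanishing mean oscillation. The uniformity in $x$ is then automatic, as the $VMO$ modulus already carries a supremum over $x$.
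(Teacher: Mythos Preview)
Your proof is correct. Both your argument and the paper's hinge on comparing the weighted mean $\gamma'_\varepsilon(x)$ to the plain ball average $a_\varepsilon = \overline{\gamma'}_{B_\varepsilon(x)}$ and then invoking the VMO hypothesis, but the mechanics differ. The paper proceeds by a purely first-order argument: it writes $\gamma'_\varepsilon(x) = a_\varepsilon + \int(\gamma'(x-y)-a_\varepsilon)\,\eta_\varepsilon(y)\,dy$, shows $|a_\varepsilon|\ge 1-o(1)$ via the triangle inequality against $|\gamma'|=1$, and bounds the remainder by $\|\eta\|_\infty$ times the $L^1$-oscillation --- no $L^2$ detour at all. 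Your route instead passes through the exact variance identity $1-|m(x)|^2=\int|\gamma'-m(x)|^2\eta_\varepsilon$, uses the minimizing property of the weighted mean to switch to $a_\varepsilon$, and then converts $L^2$ back to $L^1$ via the trivial bound $|\gamma'-a_\varepsilon|\le 2$. The paper's version is marginally shorter; yours has the small bonus of making the upper bound $|\gamma'_\varepsilon|\le 1$ explicit (the paper only argues the lower bound and tacitly uses this for the claimed uniform convergence) and of displaying the quantitative dependence on the VMO modulus cleanly.
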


Sometimes one might need that also the approximating curves are parametrized by arc-length and have the same length as the original curve. In this case the following theorem can help. For denote the length of a curve $\gamma$ by $L(\gamma)$.

\begin{theorem} \label{thm:ApproximationArcLength}
 Let $\tilde \gamma_\varepsilon: \mathbb R / \mathbb Z \rightarrow \mathbb R^n$ be the re-parametrization by arc-length of the unit length curve $\frac 1 {L(\gamma_\varepsilon)} \gamma_\varepsilon$ that satisfies $\tilde \gamma_{\varepsilon}(0) = \frac 1 {L(\gamma_\varepsilon)} \gamma_\varepsilon(0)$. Then $\tilde \gamma_\varepsilon$ still converges to the curve $\gamma$ in $W^{\frac 32, 2}$.
\end{theorem}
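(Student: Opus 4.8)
The plan is to reduce the statement to a single convergence of unit tangents in $W^{\frac12,2}$ and to isolate that convergence as the hard core of the theorem. Throughout I assume, as the statement tacitly requires, that $\gamma\in W^{\frac32,2}$, so that $\gamma'\in W^{\frac12,2}\subset VMO$ and, by the standard mollifier estimates, $\gamma_\varepsilon\to\gamma$ in $W^{\frac32,2}$; the word ``still'' refers to upgrading this to the arc-length reparametrization. First I would dispose of the length rescaling: by Theorem~\ref{thm:ApproximatingVMOCurves} we have $|\gamma_\varepsilon'|\to1$ uniformly, hence $L(\gamma_\varepsilon)=\int_0^1|\gamma_\varepsilon'|\to1$ and $\hat\gamma_\varepsilon:=\gamma_\varepsilon/L(\gamma_\varepsilon)\to\gamma$ in $W^{\frac32,2}$ with $L(\hat\gamma_\varepsilon)=1$. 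Writing $s_\varepsilon(t)=\int_0^t|\hat\gamma_\varepsilon'|$ and $\phi_\varepsilon:=s_\varepsilon^{-1}$, we have $\tilde\gamma_\varepsilon=\hat\gamma_\varepsilon\circ\phi_\varepsilon$ and, since $\tilde\gamma_\varepsilon$ is arc-length parametrized, $\tilde\gamma_\varepsilon'=\tau_\varepsilon\circ\phi_\varepsilon$ with the unit tangent $\tau_\varepsilon:=\gamma_\varepsilon'/|\gamma_\varepsilon'|$ (the length factor cancels). By Theorem~\ref{thm:ApproximatingVMOCurves} again, $\phi_\varepsilon'=L(\gamma_\varepsilon)/(|\gamma_\varepsilon'|\circ\phi_\varepsilon)\to1$ and $\phi_\varepsilon\to\mathrm{id}$ uniformly, so $\phi_\varepsilon$ is bi-Lipschitz with constants tending to $1$ and $\tilde\gamma_\varepsilon\to\gamma$ uniformly. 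As the lower order ($L^2$) parts are thereby controlled, everything reduces to $\tilde\gamma_\varepsilon'\to\gamma'$ in $W^{\frac12,2}$.

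To pass the reparametrization through the tangent I would split
\[
 \tilde\gamma_\varepsilon'-\gamma' = (\tau_\varepsilon-\gamma')\circ\phi_\varepsilon + \bigl(\gamma'\circ\phi_\varepsilon-\gamma'\bigr).
\]
For the first summand, the change of variables $u=\phi_\varepsilon(x)$, $v=\phi_\varepsilon(y)$ in the Gagliardo double integral of $W^{\frac12,2}$ shows that $g\mapsto g\circ\phi_\varepsilon$ is bounded on $W^{\frac12,2}$ with operator norm $1+o(1)$, because the bi-Lipschitz constants of $\phi_\varepsilon$ tend to $1$; hence this summand is bounded by $(1+o(1))\,\|\tau_\varepsilon-\gamma'\|_{W^{\frac12,2}}$. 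For the second summand I would use continuity of composition: approximating $\gamma'$ in $W^{\frac12,2}$ by smooth $v_\delta$, one has $v_\delta\circ\phi_\varepsilon\to v_\delta$ (even in $W^{1,2}$, from $\phi_\varepsilon\to\mathrm{id}$ in $C^1$), and the remainder is absorbed by the uniform operator bound in a standard density argument. Both steps use only the uniform information of Theorem~\ref{thm:ApproximatingVMOCurves}, so the whole theorem is now reduced to the single statement $\tau_\varepsilon\to\gamma'$ in $W^{\frac12,2}$.

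This last convergence -- equivalently $|\gamma_\varepsilon'|\to1$ in $W^{\frac12,2}$ -- is the main obstacle, and it is a genuine strengthening of the uniform convergence in Theorem~\ref{thm:ApproximatingVMOCurves}. The difficulty is that $\gamma_\varepsilon'$ itself does \emph{not} converge uniformly, only its modulus does, so the naive product and quotient estimates in the algebra $W^{\frac12,2}\cap L^\infty$ leave uncontrolled cross terms of the type $\|\gamma_\varepsilon'-\gamma'\|_{L^\infty}\,[\,\cdot\,]_{W^{\frac12,2}}$. I would avoid them with the elementary pointwise identity
\[
 \bigl(|a|-|b|\bigr)^2 = |a-b|^2 - |a|\,|b|\,\Bigl|\tfrac{a}{|a|}-\tfrac{b}{|b|}\Bigr|^2,
\]
applied to $a=\gamma_\varepsilon'(x)$, $b=\gamma_\varepsilon'(y)$ and integrated against $|x-y|^{-2}$, which gives
\[
 \bigl[\,|\gamma_\varepsilon'|\,\bigr]_{W^{\frac12,2}}^2 = \bigl[\gamma_\varepsilon'\bigr]_{W^{\frac12,2}}^2 - \iint \frac{|\gamma_\varepsilon'(x)|\,|\gamma_\varepsilon'(y)|\,|\tau_\varepsilon(x)-\tau_\varepsilon(y)|^2}{|x-y|^2}\,dx\,dy .
\]

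Now $\gamma_\varepsilon'\to\gamma'$ in $W^{\frac12,2}$ and $\tau_\varepsilon\to\gamma'$ almost everywhere; lower semicontinuity of the seminorm yields $[\gamma']_{W^{\frac12,2}}\le\liminf[\tau_\varepsilon]_{W^{\frac12,2}}$, while the displayed identity together with $|\gamma_\varepsilon'|\to1$ uniformly yields the matching upper bound $\limsup[\tau_\varepsilon]_{W^{\frac12,2}}\le[\gamma']_{W^{\frac12,2}}$. Hence $[\tau_\varepsilon]_{W^{\frac12,2}}\to[\gamma']_{W^{\frac12,2}}$, and feeding this back into the identity gives $\bigl[\,|\gamma_\varepsilon'|\,\bigr]_{W^{\frac12,2}}\to0$. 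Finally, $\tau_\varepsilon\to\gamma'$ almost everywhere and $|\tau_\varepsilon|\equiv1$ force $\tau_\varepsilon\to\gamma'$ in $L^2$ and thus weakly in the Hilbert space $W^{\frac12,2}$; since moreover the full $W^{\frac12,2}$ norms converge (the $L^2$ norms are identically $1$ and the seminorms converge), weak convergence plus convergence of norms upgrades to strong convergence $\tau_\varepsilon\to\gamma'$ in $W^{\frac12,2}$, which completes the reduction and hence the proof.
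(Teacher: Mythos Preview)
Your argument is correct and takes a genuinely different route from the paper. The paper proceeds by applying Vitali's theorem directly to the Gagliardo integrand of $\tilde\gamma_\varepsilon'-\gamma'$: it bounds this integrand by $\frac{|\tilde\gamma_\varepsilon'(x)-\tilde\gamma_\varepsilon'(y)|^2}{|x-y|^2}+\frac{|\gamma'(x)-\gamma'(y)|^2}{|x-y|^2}$, then uses the bi-Lipschitz change of variables $(x,y)\mapsto(s(x),s(y))$ to reduce the uniform integrability of the first term to that of $\frac{|\gamma_\varepsilon'(x)-\gamma_\varepsilon'(y)|^2}{|x-y|^2}$, which holds since $\gamma_\varepsilon'\to\gamma'$ in $W^{\frac12,2}$.

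You instead peel off the reparametrization analytically via the splitting $(\tau_\varepsilon-\gamma')\circ\phi_\varepsilon+(\gamma'\circ\phi_\varepsilon-\gamma')$, handle the composition terms by the $(1+o(1))$ operator bound and a density argument, and isolate the core fact $\tau_\varepsilon\to\gamma'$ in $W^{\frac12,2}$. For that you exploit the Hilbert-space structure (Radon--Riesz): weak convergence is obtained from a.e.\ convergence plus boundedness, and norm convergence from the algebraic identity $(|a|-|b|)^2=|a-b|^2-|a||b|\bigl|\tfrac{a}{|a|}-\tfrac{b}{|b|}\bigr|^2$, which links $[\gamma_\varepsilon']$, $[\tau_\varepsilon]$ and $[\,|\gamma_\varepsilon'|\,]$. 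This is a clean and rather elegant substitute for the paper's Vitali/uniform-integrability machinery; it buys you, as a by-product, the separate statements $[\tau_\varepsilon]\to[\gamma']$ and $[\,|\gamma_\varepsilon'|\,]\to 0$, but it is special to the $L^2$-based scale, whereas the paper's Vitali scheme transfers more readily to other integrands (as indeed it is reused later for $E_{\text{m\"ob}}$, $E^1$, $E^2$).
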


Though the proof of the Theorem~\ref{thm:ApproximatingVMOCurves} is extraordinarily elementary and short, it is the impression of the author that this result and the techniques that lead to it are unknown to the community. In the last section, we will show how to apply the techniques of this article in order to answer some open questions in the literature and settle some conjectures in the context of knot energies. All the statements of the theorems are known for curves that possess more regularity than we can naturally assume. The approximation techniques above allow to extend these statements to curves of bounded M\"obius energy -- which is the most natural assumption for these theorems. Let us just pick out one particular open question due to Zheng-Xu He.

Jun O'Hara introduced the M\"obius energy \cite{OHara1991}
\begin{equation*}
   E_{\text{m\"ob}}(\gamma):= \iint_{\mathbb R / \mathbb Z} \left(\frac 1 {|\gamma(x)- \gamma(y)|^2} - \frac 1 {d_\gamma(x,y)^2} \right) |\gamma'(x)||\gamma'(y)| dx dy 
 \end{equation*}
for regular curves $\gamma \in C^{0,1}(\mathbb R / \mathbb Z, \mathbb R^n)$ which was the first geometric implementation of the concept of knot energy. In the influential paper \cite{Freedman1994}, Freedman, He, and Wang discussed many interesting properties of this energy including its invariance under M\"obius transformations.

In his article \cite{He2000}, Zheng-Xu He asked whether any regular curve of bounded M\"obius energy can be approximated by smooth curves such that the energy converges.
We will use the above approximation result together with the characterization of curves of finite M\"obius energy in \cite{Blatt2012} to give the following answer:

\begin{theorem} \label{thm:ApproximatingBoundedEnergyCurves}
 Let $\gamma \in C^{0,1}(\mathbb R / \mathbb Z, \mathbb R^n)$ be a curve parametrized by arc-length such that O'Hara's M\"obius energy $ E_{\text{m\"ob}}(\gamma)$
 is bounded. Then there is a constant $\varepsilon_0>0$ such that $\gamma_\varepsilon$ are smooth regular curves for all $0 < \varepsilon < \varepsilon_0$ converging to $\gamma$ in the fractional Sobolev space $W^{\frac 32,2}$ and in energy, i.e. $E^2 (\gamma_\varepsilon ) \rightarrow E^2 (\gamma)$ for $\varepsilon  \rightarrow 0$.
\end{theorem}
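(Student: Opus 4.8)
The plan is to feed the characterization of finite-energy curves from \cite{Blatt2012} into Theorem~\ref{thm:ApproximatingVMOCurves}. First I would recall that, for the arc-length parametrized curve $\gamma$, the hypothesis $E_{\text{m\"ob}}(\gamma) = E^2(\gamma) < \infty$ is by \cite{Blatt2012} equivalent to $\gamma \in W^{\frac 32,2}(\mathbb R/\mathbb Z, \mathbb R^n)$ together with $\gamma$ being bi-Lipschitz, i.e. there is $c_0 > 0$ with $|\gamma(x) - \gamma(y)| \ge c_0\, d_\gamma(x,y)$ for all $x,y$. Since the domain is one-dimensional, the derivative $\gamma' \in W^{\frac 12,2}$ sits in the critical Sobolev space $(s p = \tfrac 12 \cdot 2 = 1 = \dim)$, which embeds into $VMO(\mathbb R/\mathbb Z, \mathbb R^n)$. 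Thus the assumptions of Theorem~\ref{thm:ApproximatingVMOCurves} hold, and the mollifications $\gamma_\varepsilon = \gamma \ast \eta_\varepsilon$ --- which are automatically $C^\infty$ --- satisfy $|\gamma_\varepsilon'| \to 1$ uniformly, hence are smooth regular curves for $\varepsilon$ below some $\varepsilon_0$. Standard mollifier estimates in fractional Sobolev spaces ($p = 2 < \infty$) give $\gamma_\varepsilon \to \gamma$ in $W^{\frac 32,2}$; in particular $\sup_{\varepsilon}\|\gamma_\varepsilon\|_{W^{\frac 32,2}} < \infty$, $\gamma_\varepsilon \to \gamma$ uniformly, and $d_{\gamma_\varepsilon} \to d_\gamma$ uniformly since $|\gamma_\varepsilon'| \to 1$.

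The substantial part is the energy convergence $E^2(\gamma_\varepsilon) \to E^2(\gamma)$. For this I would first establish a \emph{uniform} bi-Lipschitz bound: there are $c_1 > 0$ and $\varepsilon_0 > 0$ with $|\gamma_\varepsilon(x) - \gamma_\varepsilon(y)| \ge c_1\, d_{\gamma_\varepsilon}(x,y)$ for all $x,y$ and all $\varepsilon < \varepsilon_0$. On the regime $d_\gamma(x,y) \ge \delta$ this is immediate from uniform convergence $\gamma_\varepsilon \to \gamma$ and the bi-Lipschitz bound for $\gamma$, using $d_{\gamma_\varepsilon} \to d_\gamma$. On the small-scale regime $d_\gamma(x,y) < \delta$ it follows because the mean oscillations of $\gamma_\varepsilon' = \gamma' \ast \eta_\varepsilon$ are controlled, uniformly in $\varepsilon$, by those of $\gamma' \in VMO$ --- the same mechanism that drives Theorem~\ref{thm:ApproximatingVMOCurves} --- which makes every $\gamma_\varepsilon$ locally bi-Lipschitz on scale $\delta$ with constant arbitrarily close to $1$.

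With the uniform bi-Lipschitz bound in hand, I would prove energy convergence by a generalized dominated convergence argument. Writing the nonnegative integrand as
\begin{equation*}
 f_\varepsilon(x,y) = \left( \frac{1}{|\gamma_\varepsilon(x) - \gamma_\varepsilon(y)|^2} - \frac{1}{d_{\gamma_\varepsilon}(x,y)^2} \right) |\gamma_\varepsilon'(x)|\,|\gamma_\varepsilon'(y)|,
\end{equation*}
the uniform convergences above yield $f_\varepsilon \to f_0$ almost everywhere. To pass to the limit near the diagonal I would use the elementary identity for arc-length curves,
\begin{equation*}
 d_\gamma(x,y)^2 - |\gamma(x) - \gamma(y)|^2 = \tfrac 12 \int_x^y \int_x^y |\gamma'(s) - \gamma'(t)|^2 \, ds\, dt,
\end{equation*}
which, combined with the bi-Lipschitz bound, estimates $f_\varepsilon$ from above by a majorant $g_\varepsilon$ whose double integral is comparable, after Fubini and the elementary kernel computation, to the Gagliardo seminorm $[\gamma_\varepsilon']_{W^{\frac 12,2}}^2$. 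Since $\gamma_\varepsilon' \to \gamma'$ in $W^{\frac 12,2}$, the majorants satisfy $g_\varepsilon \to g_0$ a.e. and $\iint g_\varepsilon \to \iint g_0$, so Pratt's lemma (dominated convergence with varying dominants) gives $\iint f_\varepsilon \to \iint f_0$, i.e. $E^2(\gamma_\varepsilon) \to E^2(\gamma)$. Away from the diagonal the integrand is bounded and converges uniformly, so that region is handled directly.

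I expect the main obstacle to be the passage to the limit near the diagonal: the naive pointwise bound on $f_\varepsilon$ is not integrable by itself, so one genuinely needs the uniform $W^{\frac 32,2}$-bound to manufacture an equi-integrable majorant with convergent integral. The subsidiary technical difficulties are controlling the intrinsic distance $d_{\gamma_\varepsilon}$ uniformly --- including the switch between the two arcs determining $d_{\gamma_\varepsilon}(x,y)$ on $\mathbb R/\mathbb Z$ --- and assembling the small-scale and large-scale estimates into a single bi-Lipschitz constant valid across all scales simultaneously.
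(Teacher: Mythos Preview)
Your proposal is correct and follows essentially the same route as the paper: the classification \cite{Blatt2012} gives $\gamma'\in W^{1/2,2}\subset VMO$, Theorem~\ref{thm:ApproximatingVMOCurves} then yields regularity and $|\gamma_\varepsilon'|\to 1$, and energy convergence is obtained by dominating the integrand $f_\varepsilon$ by a Gagliardo-type quantity in $\gamma_\varepsilon'$ whose integral converges because $\gamma_\varepsilon'\to\gamma'$ in $W^{1/2,2}$---the paper phrases this last step via Vitali's theorem (proving $L^1$-convergence of the majorant by a density argument) rather than Pratt's lemma, but the mechanism is the same. One small caveat: the identity $d_\gamma(x,y)^2-|\gamma(x)-\gamma(y)|^2=\tfrac12\iint|\gamma'(s)-\gamma'(t)|^2\,ds\,dt$ you invoke holds only for arc-length curves, whereas $\gamma_\varepsilon$ is not arc-length; the paper circumvents this with the elementary inequality $|a||b|-\langle a,b\rangle\le\frac{|b|}{|a|}|a-b|^2$, which together with the uniform bounds $0<c\le|\gamma_\varepsilon'|\le C$ produces the same majorant up to a constant.
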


We hope that the list of applications, although far from being complete, convinces the reader that the results and techniques developed in this article are of great importance for the analysis of critical knot energies for curves.

\section{Approximation by Smooth Curves - Proof of Theorem~\ref{thm:ApproximatingVMOCurves} and Theorem~\ref{thm:ApproximationArcLength}} \label{sec:Approximation} 

\begin{proof}[Proof of Theorem~\ref{thm:ApproximatingVMOCurves}]
Setting
 $$
  a_r := \frac 1 {2r}\int_{B_r(x)} \gamma' dy
 $$
for $r \leq \frac 1 2$ we observe that
 \begin{align*}
  \frac 1 {2r}\int_{B_r (x)} |\gamma' - a_r| dy = o(1)
 \end{align*}
as $r \rightarrow 0$ since $\gamma'$ has vanishing mean oscillation.

We calculate using the triangle inequality and the estimate above 
 \begin{align} \label{eq:convergenceOfMean}
  |a_r| = \frac 1 {2r} \int_{B_r(x)} |a_r| dy \geq \frac 1 {2r} \int \left( |\gamma'| - |  |a_r| -  |\gamma'|| \right) \geq 1- \int_{B_r(x)} |a_r - \gamma' | dx \geq 1 - o(1)
 \end{align}
as $r \rightarrow 0$.

So we derive
 \begin{align*}
  |\gamma'_\varepsilon (x) | & = |\int_{B_\varepsilon(0)} \gamma(x+y) \eta_\varepsilon (y) dy |
  \geq | a_{\varepsilon}|
  - \int_{B_{\varepsilon(0)}} |\gamma' (x-y)- a_\varepsilon| |\eta_\varepsilon(y)| dy
 \\ &\geq |a_\varepsilon| - C o(1) \geq 1 - o(1),
 \end{align*}
where we used the definition of $VMO$ and \eqref{eq:convergenceOfMean}.
Hence,
$|\gamma_\varepsilon'| \rightarrow |\gamma'|=1$ uniformly and especially $\gamma$ is a regular curve for $\varepsilon >0$ small enough.
This completes the proof of Theorem~\ref{thm:ApproximatingVMOCurves}.
\end{proof}

\begin{proof}[Proof of Theorem~\ref{thm:ApproximationArcLength}]
Let us now consider the curves $\tilde \gamma_\varepsilon$ which apparently converge to $\gamma$ uniformly and hence especially in $L^2$. We now show that the derivatives of these curves satisfy
\begin{equation}
 \lim_{\varepsilon \rightarrow 0} \lfloor \tilde \gamma'_\varepsilon - \gamma' \rfloor_{W^{\frac 1 2,2} } =0
\end{equation}
using Vitali's theorem where
\begin{equation*}
 \lfloor f \rfloor_{W^{\frac 1 2,2} } = \int_{\mathbb R / \mathbb Z} \int_{\mathbb R / \mathbb Z} \frac {|f(x) - f(y)|^2}{|x-y|^2} dx dy
\end{equation*}
denotes the Douglas functional also known as Gagliardo semi-norm.
We therefore consider the integrand
\begin{equation*}
 I_\varepsilon (x,y) := \frac {|(\tilde \gamma'_\varepsilon(x) - \gamma'(x) ) - (\tilde \gamma_\varepsilon (y) - \gamma'(y))|^2}{|x-y|^2}
\end{equation*}
which converges pointwise almost everywhere to $0$ and can be estimate from above by
\begin{equation*}
   \frac {|\tilde \gamma'_\varepsilon(x) - \tilde \gamma_\varepsilon'(y)|^2}{|x-y|^2} + \frac {| \gamma'(x) - \gamma'(y)|^2}{|x-y|^2}
\end{equation*}
Let us now consider the bi-Lipschitz transformation 
\begin{gather*} \psi: (\mathbb R / \mathbb Z)^2 \rightarrow  (\mathbb R / \mathbb Z)^2 \\
(x,y) \mapsto (s(x),s(y)). 
\end{gather*}
For $E \subset (\mathbb R / \mathbb Z)^2$
we have
\begin{align*}
 \iint_{E} \frac {|\tilde \gamma'_\varepsilon(x) - \tilde \gamma_\varepsilon'(y)|^2}{|x-y|^2} dx dy & \leq C \iint_{\psi^{-1}(E)} \frac {| \gamma'_\varepsilon(x) -  \gamma_\varepsilon'(y)|^2}{|x-y|^2} |\gamma_\varepsilon'(x)| |\gamma_\varepsilon'(y) |dx dy \\
 & \leq C \iint_{\psi^{-1}(E)} \frac {| \gamma'_\varepsilon(x) -  \gamma'_\varepsilon(y)|^2}{|x-y|^2}dx dy.
\end{align*}
Since the integrands 
\begin{equation*}
  \frac {| \gamma'_\varepsilon(x) -  \gamma_\varepsilon'(y)|^2}{|x-y|^2}
\end{equation*}
are uniformly integrable, for every $\varepsilon_0>0$ there is an $\delta >0$ such that
$|\psi^{-}(E)|\leq \delta$ implies
\begin{equation*}
 \iint_{E} \frac {|\tilde \gamma'_\varepsilon(x) - \tilde \gamma_\varepsilon'(y)|^2}{|x-y|^2} dx dy \leq \varepsilon.
\end{equation*}
But, as $\psi^{-1}$ is a Lipschitz mapping, we get that there is an $\tilde \delta >0$ such that $|E| \leq \tilde \delta$ implies $|\psi^{-1}(E)|$ and hence
\begin{equation*}
 \iint_{E} \frac {|\tilde \gamma'_\varepsilon(x) - \tilde \gamma_\varepsilon'(y)|^2}{|x-y|^2} dx dy \leq \varepsilon.
\end{equation*}
 
Thus, $ I_\varepsilon$ is uniformly integrable and we can apply Vitali's theorem to prove the claim.

\end{proof}

\section{Applications}

We want to present several applications of Theorem~\ref{thm:ApproximatingVMOCurves}. We will start with analyzing the convergence of the M\"obius energy and the parts of its decomposition found by Ishikezi and Nagasawa if the original curve has bounded M\"obius energy. Unfortunately, the smoothened curves $\gamma_\varepsilon$ in general do not converge in $W^{1, \infty}$ -- so we cannot apply the fact that the M\"obius energy is $C^1$ in $W^{\frac 32,2} \cap W^{1,\infty}$ \cite[Theorem~II]{Blatt2015a}. We will show how to use the convergence of $|\gamma_\varepsilon'|$ from Theorem~\ref{thm:ApproximatingVMOCurves} together with bi-Lipschitz estimates in order to prove convergence in energy.

\subsection{Fractional Sobolev Spaces}

For the rest of the article we need the classification of curves of finite energy $E^\alpha$ in \cite{Blatt2012} using fractional Sobolev spaces. For $s \in (0,1)$, $p \in [1,\infty)$ and $k\in \mathbb N_0$ the space $W^{k+s,p}(\mathbb R / \mathbb Z, \mathbb R^n)$ consists of all functions $f\in W^{k,p}(\mathbb R / \mathbb R, \mathbb R^n )$ for which
$$
 |f^{(k)}|_{W^{s,p}}:= \left( \int_{\mathbb R / \mathbb Z} \int_{\mathbb R / \mathbb Z} \frac {|f^{(k)}(x) - f^{(k)}(y)|^p}{|x-y|^{1+sp}} dx dy \right)^{\frac 1 p} 
$$
is finite. This space is equipped with the norm $\|f\|_{W^{k+s,p}} := \|f\|_{W^{k,p}} + |f^{(k)}|_{W^{s,p}}.$  For an thorough discussion of the subject of fractional Sobolev space we point the reader to the monograph of Triebel \cite{Triebel1983}. Chapter 7 of \cite{Adams2003} and the very nicely written and easy accessible introduction to the subject \cite{DiNezza2012}.

The following result is a special case of Theorem~1.1 in \cite{Blatt2012}:

\begin{theorem} [Classification of curves with finite M\"obius energy] \label{thm:EnergySpace}
Let $\gamma \in C^{0,1}(\mathbb R / \mathbb Z, \mathbb R^n)$ be a curve parametrized by arc-length. Then the M\"obius energy $E_{\text{m\"ob}}(\gamma)$ is finite if and only if $\gamma$ is bi-Lipschitz and belongs to $W^{\frac 3 2,2}(\mathbb R / \mathbb Z, \mathbb R^n)$. 
\end{theorem}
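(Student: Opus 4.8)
The plan is to express the integrand entirely through the derivative $\gamma'$, so that the two individually divergent terms of the energy combine into one manifestly non-negative quantity, and then to isolate the near-diagonal part (which encodes the $W^{\frac32,2}$-regularity) from the off-diagonal part (which is governed by the bi-Lipschitz condition). Since $\gamma$ is parametrized by arc-length we have $|\gamma'|=1$ almost everywhere and $d_\gamma(x,y)=\min\{|x-y|,1-|x-y|\}$, so the energy reduces to $\iint_{(\mathbb R/\mathbb Z)^2}\bigl(|\gamma(x)-\gamma(y)|^{-2}-d_\gamma(x,y)^{-2}\bigr)\,dx\,dy$, and because $\gamma$ is $1$-Lipschitz the integrand is $\geq 0$. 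The computational heart is the identity
\begin{equation*}
 d_\gamma(x,y)^2-|\gamma(x)-\gamma(y)|^2=\tfrac12\int_y^x\!\!\int_y^x|\gamma'(s)-\gamma'(t)|^2\,ds\,dt,
\end{equation*}
valid along the minimizing arc between $x$ and $y$; it follows at once from $\gamma(x)-\gamma(y)=\int_y^x\gamma'$ together with $|\gamma'|\equiv 1$. Writing the integrand as this numerator divided by $|\gamma(x)-\gamma(y)|^2\,d_\gamma(x,y)^2$ makes both the role of the bi-Lipschitz hypothesis (control of the denominator) and that of $\gamma'$ (control of the numerator) transparent; note that neither of the two terms is separately integrable near the diagonal, so the identity is what makes the whole argument possible.

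Second, I would prove the equivalence of the near-diagonal energy with the Douglas functional $\lfloor\gamma'\rfloor_{W^{\frac12,2}}=\iint|\gamma'(s)-\gamma'(t)|^2|s-t|^{-2}\,ds\,dt$, which is exactly the squared $W^{\frac32,2}$-seminorm of $\gamma$. On the near-diagonal region the bi-Lipschitz bound $c\,d_\gamma\le|\gamma(x)-\gamma(y)|\le d_\gamma$ reduces the denominator to $d_\gamma^4\sim|x-y|^4$, so the local energy is comparable to $\iint|x-y|^{-4}\int_y^x\int_y^x|\gamma'(s)-\gamma'(t)|^2\,ds\,dt\,dx\,dy$. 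Applying Fubini and integrating out the pairs $(x,y)$ whose minimizing arc contains a fixed pair $(s,t)$ produces the weight $\int_0^\infty\!\int_0^\infty(a+b+|s-t|)^{-4}\,da\,db\sim|s-t|^{-2}$, which recovers precisely the $W^{\frac12,2}$-kernel for $\gamma'$. Thus, up to bi-Lipschitz constants, the local energy and $\lfloor\gamma'\rfloor_{W^{\frac12,2}}$ control one another, giving $\gamma\in W^{\frac32,2}$ in one direction and finiteness of the local contribution in the other.

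Third, I would treat the off-diagonal region and, crucially, the bi-Lipschitz condition itself. On $\{d_\gamma(x,y)\ge\rho_0\}$ the subtracted term $d_\gamma^{-2}\le\rho_0^{-2}$ is bounded, so finiteness of the energy is equivalent to finiteness of $\iint_{d_\gamma\ge\rho_0}|\gamma(x)-\gamma(y)|^{-2}\,dx\,dy$; conversely bi-Lipschitzness makes this integrand bounded and hence integrable over the compact region. The genuinely delicate point -- which I expect to be the main obstacle -- is the forward implication that finite energy forces the uniform lower bound $|\gamma(x)-\gamma(y)|\ge c\,d_\gamma(x,y)$. This is the quantitative form of O'Hara's observation that finite M\"obius energy excludes self-intersections: one has to rule out that $|\gamma(x)-\gamma(y)|$ becomes small on a non-negligible set of intrinsically separated pairs, and then bootstrap the absence of such near-double-points into a uniform bi-Lipschitz constant, using the continuity afforded by $\gamma'\in VMO$ (equivalently, the already established $W^{\frac32,2}$-regularity). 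Combining the three steps yields both implications: finite energy gives $\gamma\in W^{\frac32,2}$ together with the bi-Lipschitz estimate, while those two properties bound the local and global contributions and hence the energy.
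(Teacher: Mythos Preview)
The paper does not prove this statement at all: it is quoted as a special case of Theorem~1.1 in \cite{Blatt2012} and is used as a black box. So there is no ``paper's own proof'' to compare with; what you have written is in fact a sketch of the argument behind the cited reference.

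Your outline is essentially the correct strategy and matches the approach in \cite{Blatt2012}: the key identity $d_\gamma(x,y)^2-|\gamma(x)-\gamma(y)|^2=\tfrac12\int_y^x\!\int_y^x|\gamma'(s)-\gamma'(t)|^2\,ds\,dt$ for arc-length curves, followed by the Fubini computation turning $\iint|x-y|^{-4}\int_y^x\!\int_y^x|\gamma'(s)-\gamma'(t)|^2\,ds\,dt\,dx\,dy$ into the Gagliardo semi-norm of $\gamma'$, is exactly the mechanism. Two remarks on the logic. First, in the forward implication you should be explicit that the \emph{lower} bound on the integrand only needs the trivial inequality $|\gamma(x)-\gamma(y)|\le d_\gamma(x,y)$, so that $E_{\text{m\"ob}}(\gamma)<\infty$ yields $\gamma\in W^{3/2,2}$ \emph{before} any bi-Lipschitz information is available; as written, your Step~2 invokes the two-sided bound and so reads as if it were only the backward implication. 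Second, your plan for extracting bi-Lipschitzness is right but the phrase ``continuity afforded by $\gamma'\in VMO$'' is misleading: $VMO$ does not make $\gamma'$ continuous. What it does give---via the very computation in the paper's proof of Theorem~\ref{thm:ApproximatingVMOCurves}---is that the averages $\tfrac1{|x-y|}\int_y^x\gamma'$ have norm uniformly close to $1$ for $|x-y|$ small, i.e.\ a \emph{local} bi-Lipschitz estimate near the diagonal. Combined with injectivity (which follows because a double point forces $\iint|\gamma(x)-\gamma(y)|^{-2}$ to diverge on a set where $d_\gamma$ is bounded below) and compactness of $\mathbb R/\mathbb Z$, this upgrades to a global bi-Lipschitz constant.
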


In the following, we will use the well-known fact, that $f \in W^{s,p} (\mathbb R / \mathbb Z, \mathbb R^n)$, $s \in (0,1)$, $p = \frac 1 s$, implies $f \in VMO$. This follows for example from the line of inequalities 
\begin{align*}
 \frac 1 {2r}\int_{B_r(x) } |f - \overline f_{B_r(x)}| dx & \leq \frac 1 {(2r)^2} \int_{B_r(x)} \int_{B_r(x)} |f(y) - f(z)| dy dz \\ 
 &\leq \frac {1} {(2r)^{2/p}} \left(\int_{B_r(x)} \int_{B_r(x)} |f(y) - f(z)|^p dy dz\right)^{\frac 1 p} =  \left(\int_{B_r(x)} \int_{B_r(x)} \frac {|f(y) - f(z)|^p }{{2r}^2} dy dz\right)^{\frac 1 p} \\ 
 & \leq \left(\int_{B_r(x)} \int_{B_{r}(x)} \frac {|f(y) - f(z)| }{|y-z|^p} dy dz\right)^{\frac 1 p} \\ 
 & \leq \left(\int_{y \in \mathbb R / \mathbb Z} \int_{B_{2r}(0)} \frac {|f(z+w) - f(z)|^p }{|w|^{1+sp}} dw dz\right)^{\frac 1 p}  \rightarrow 0
\end{align*}
for $r \rightarrow 0$.

Applying this to $f= \gamma'$, in view of Theorem~\ref{thm:EnergySpace} the velocity of a curve parametrized by arc-length of finite M\"obius energy belongs to $VMO$. Hence, we can apply Theorem~\ref{thm:ApproximatingVMOCurves}.

\subsection{Convergence of Some Critical Knot Energies}

\subsubsection{The M\"obius Energy}

As a first application, we want to answer a question due to He \cite{He2000}[Question 8 in Section 7]. Zhen-Xu He asked, whether a curve of bounded M\"obius energy can be approximated by smooth curves such that the energies of these curves converge to the energy of the initial curve. Then following lemma shows that this is indeed the case and that one can just use the mollified curves $\gamma_\varepsilon$. This lemma together with Theorem~\ref{thm:ApproximatingVMOCurves} obviously proves Theorem~\ref{thm:ApproximatingBoundedEnergyCurves}.

\begin{lemma} [Convergence of the M\"obius energy]\label{lem:ConvOfMoebiusEnergy}
 Let $\gamma \in C^{0,1}(\mathbb R / \mathbb Z, \mathbb R^n)$ be parametrized by arc-length of finite M\"obius energy. Then we have
 $ E_{\text{m\"ob}} (\gamma_\varepsilon) \rightarrow E(\gamma)$.
\end{lemma}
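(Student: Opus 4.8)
The plan is to exploit the invariances of the M\"obius energy to reduce to the arc-length parametrized approximants of Theorem~\ref{thm:ApproximationArcLength}, and then to pass to the limit in the energy integral by Vitali's convergence theorem, exactly as in the proof of that theorem. Since $E_{\text{m\"ob}}$ is invariant under reparametrization and under scaling, we have $E_{\text{m\"ob}}(\gamma_\varepsilon) = E_{\text{m\"ob}}(\tilde\gamma_\varepsilon)$ for the $\tilde\gamma_\varepsilon$ of Theorem~\ref{thm:ApproximationArcLength}, which are smooth and regular, hence of finite energy for small $\varepsilon$. By that theorem $\tilde\gamma_\varepsilon \to \gamma$ in $W^{\frac32,2}$, so $\lfloor\tilde\gamma_\varepsilon' - \gamma'\rfloor_{W^{\frac12,2}}\to 0$; writing $\phi_\varepsilon(s,t) = \frac{\tilde\gamma_\varepsilon'(s)-\tilde\gamma_\varepsilon'(t)}{s-t}$ and defining $\phi$ analogously for $\gamma$, this says $\phi_\varepsilon \to \phi$ in $L^2$, whence the densities $h_\varepsilon := |\phi_\varepsilon|^2$ converge to $h := |\phi|^2$ in $L^1((\mathbb R/\mathbb Z)^2)$. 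Both $\tilde\gamma_\varepsilon$ and $\gamma$ are parametrized by arc-length with unit length, so $d_{\tilde\gamma_\varepsilon}(x,y) = d_\gamma(x,y) = d(x,y)$ is the intrinsic distance on the circle, and the energy integrands are
\begin{equation*}
 g_\varepsilon(x,y) = \frac{1}{|\tilde\gamma_\varepsilon(x)-\tilde\gamma_\varepsilon(y)|^2} - \frac{1}{d(x,y)^2}, \qquad g(x,y) = \frac{1}{|\gamma(x)-\gamma(y)|^2} - \frac{1}{d(x,y)^2}.
\end{equation*}
Since $\tilde\gamma_\varepsilon\to\gamma$ uniformly and $\gamma$ is injective, $g_\varepsilon\to g$ pointwise for all $x\neq y$.

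The crucial structural input is the elementary identity, valid for any arc-length parametrized curve,
\begin{equation*}
 d(x,y)^2 - |\tilde\gamma_\varepsilon(x)-\tilde\gamma_\varepsilon(y)|^2 = \frac12 \int_y^x\int_y^x |\tilde\gamma_\varepsilon'(s)-\tilde\gamma_\varepsilon'(t)|^2\, ds\, dt,
\end{equation*}
obtained by expanding $|\tilde\gamma_\varepsilon(x)-\tilde\gamma_\varepsilon(y)|^2 = \int_y^x\int_y^x \langle\tilde\gamma_\varepsilon'(s),\tilde\gamma_\varepsilon'(t)\rangle\,ds\,dt$ and using $|\tilde\gamma_\varepsilon'|\equiv 1$; in particular $g_\varepsilon\geq 0$. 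The first step will then be to establish that the $\tilde\gamma_\varepsilon$ are uniformly bi-Lipschitz, i.e. $|\tilde\gamma_\varepsilon(x)-\tilde\gamma_\varepsilon(y)|\geq c\,d(x,y)$ with $c>0$ independent of $\varepsilon$ for $\varepsilon$ small. On scales bounded away from the diagonal this follows from the uniform convergence $\tilde\gamma_\varepsilon\to\gamma$ and the bi-Lipschitz property of $\gamma$ furnished by Theorem~\ref{thm:EnergySpace}, while on small scales one uses that $\gamma'\in VMO$ together with the control of $|\tilde\gamma_\varepsilon'|$ from Theorem~\ref{thm:ApproximatingVMOCurves} to see that $\tilde\gamma_\varepsilon$ is almost flat. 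Granting this, the identity yields the domination
\begin{equation*}
 0 \leq g_\varepsilon(x,y) = \frac{d(x,y)^2 - |\tilde\gamma_\varepsilon(x)-\tilde\gamma_\varepsilon(y)|^2}{|\tilde\gamma_\varepsilon(x)-\tilde\gamma_\varepsilon(y)|^2\, d(x,y)^2} \leq \frac{C}{d(x,y)^4}\int_y^x\int_y^x |\tilde\gamma_\varepsilon'(s)-\tilde\gamma_\varepsilon'(t)|^2\, ds\, dt.
\end{equation*}

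To conclude I would verify equi-integrability of $\{g_\varepsilon\}$ and then invoke Vitali's theorem. Writing the right-hand side as $C\,(Th_\varepsilon)(x,y)$ with the averaging operator $(Tf)(x,y) := \frac{1}{d(x,y)^4}\int_y^x\int_y^x f(s,t)\,|s-t|^2\,ds\,dt$, a direct computation of the kernel
\begin{equation*}
 K(s,t) := \iint_{\{y\leq\min(s,t),\, x\geq\max(s,t)\}} \frac{1}{d(x,y)^4}\,dx\,dy \leq \frac{C}{|s-t|^2}
\end{equation*}
shows that the weight $|s-t|^2$ is exactly cancelled, so that $T$ is bounded on $L^1((\mathbb R/\mathbb Z)^2)$. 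Consequently $Th_\varepsilon \to Th$ in $L^1$, the family $\{Th_\varepsilon\}$ is uniformly integrable, and by the domination so is $\{g_\varepsilon\}$. Together with the pointwise convergence $g_\varepsilon\to g$ and the finiteness of $E_{\text{m\"ob}}(\gamma) = \iint g$ (Theorem~\ref{thm:EnergySpace}), Vitali's convergence theorem gives $E_{\text{m\"ob}}(\gamma_\varepsilon) = \iint g_\varepsilon \to \iint g = E_{\text{m\"ob}}(\gamma)$.

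I expect the main obstacle to be the uniform bi-Lipschitz estimate near the diagonal: unlike in the $C^1$ setting, $\tilde\gamma_\varepsilon'$ need not converge uniformly, so the local non-degeneracy of the chord $|\tilde\gamma_\varepsilon(x)-\tilde\gamma_\varepsilon(y)|$ has to be extracted solely from the $VMO$-control of the derivative and the uniform bound on $|\tilde\gamma_\varepsilon'|$ from Theorem~\ref{thm:ApproximatingVMOCurves}, rather than from a derivative bound. A second delicate point is the $L^1$-boundedness of $T$, which hinges on using the sharp $d(x,y)^{-4}$ domination: the cruder $d(x,y)^{-2}$ bound produces a logarithmically divergent kernel, so this is precisely the place where the exact arc-length identity, and not merely an inequality, is required.
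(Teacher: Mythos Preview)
Your argument is correct, and it reaches the conclusion by the same Vitali strategy as the paper, but the implementation is genuinely different. The paper works directly with the mollified curves $\gamma_\varepsilon$ (which are \emph{not} arc-length parametrized), keeps the speed factors $|\gamma_\varepsilon'|$ in the integrand, and dominates the energy density by the expression
\[
\tilde I_{\gamma_\varepsilon}(x,w)=\frac{\int_0^1\int_0^1|\gamma_\varepsilon'(x+s_1w)-\gamma_\varepsilon'(x+s_2w)|^2\,ds_1ds_2}{|w|^2}.
\]
It then shows $\tilde I_{\gamma_\varepsilon}\to\tilde I_\gamma$ in $L^1$ by hand: first for smooth $\gamma$ via a Taylor estimate, then by density of $C^\infty$ in $W^{3/2,2}$. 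You instead pass to the arc-length normalized curves $\tilde\gamma_\varepsilon$ via the reparametrization and scaling invariance of $E_{\text{m\"ob}}$, invoke Theorem~\ref{thm:ApproximationArcLength} to get $h_\varepsilon\to h$ in $L^1$, and encode the domination as $g_\varepsilon\leq C\,Th_\varepsilon$ with the averaging operator $T$ bounded on $L^1$ by the kernel computation $K(s,t)\leq C|s-t|^{-2}$. Your route is more modular (it separates the convergence in $W^{1/2,2}$ from the energy estimate), but it leans on Theorem~\ref{thm:ApproximationArcLength}, while the paper's direct argument for $\gamma_\varepsilon$ is self-contained and in fact supplies the uniform integrability that the proof of Theorem~\ref{thm:ApproximationArcLength} itself quotes.

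One small remark on your sketch of the uniform bi-Lipschitz estimate: near the diagonal you cite Theorem~\ref{thm:ApproximatingVMOCurves} for control of $|\tilde\gamma_\varepsilon'|$, but $|\tilde\gamma_\varepsilon'|\equiv 1$ by construction, and Theorem~\ref{thm:ApproximatingVMOCurves} concerns $\gamma_\varepsilon$, not $\tilde\gamma_\varepsilon$. The clean way to get local flatness of $\tilde\gamma_\varepsilon$ is exactly your own identity: $|\tilde\gamma_\varepsilon(x)-\tilde\gamma_\varepsilon(y)|^2=d(x,y)^2-\tfrac12\int_y^x\int_y^x h_\varepsilon(s,t)|s-t|^2\,ds\,dt\geq d(x,y)^2\bigl(1-\tfrac12\int_y^x\int_y^x h_\varepsilon\bigr)$, and the last bracket is $\geq c>0$ for $d(x,y)$ small, uniformly in $\varepsilon$, because $\{h_\varepsilon\}$ is uniformly integrable (it converges in $L^1$). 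This closes the gap without appealing to VMO directly.
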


\begin{proof} We use Vitali's convergence theorem to prove this lemma. 
Setting $I_\gamma(z,w):= \left( \frac 1 {|\gamma (x+w) - \gamma(x)|^2} - \frac 1 {d_\gamma(x,x+w)^2} \right) |\gamma'(x)| \, |\gamma' (x+w)|$, we get
$$
  E_{\text{m\"ob}}(\gamma)  = \int_{\mathbb R / \mathbb Z} \int_{-\frac 1 2} ^{\frac 12}   I_\gamma(z,w) \,  dx dw.
$$
As $|\gamma_\varepsilon'|$ converges pointwise to $| \gamma'|$ by Theorem~\ref{thm:ApproximatingVMOCurves} and $\gamma_\varepsilon$ converges to $\gamma$ pointwise, the integrand $I_{\gamma_\varepsilon}(x,w)$ also converges
to $I_{\gamma}(x,w)$ pointwise. Let us show that the integrands are uniformly integrable. For this purpose we only have to consider points close to the diagonal, i.e. we will only integrate over $x,y \in \mathbb R / \mathbb Z$ with $|x-y|\leq \frac 1 4$, since on the rest of the domain the bi-Lipschitz estimate gives us a uniform bound on the integrand.

We have for $\varepsilon >0$ small enough and $|w|\leq \frac 1 4$ that $d_{\gamma_\varepsilon}(x+w,x) = \int_0^1 |\gamma_\varepsilon'(x+sw)|ds$.
Together with the identity $\gamma_\varepsilon(x+w) - \gamma(x) = w\int_{0}^1 \gamma_\varepsilon'(x+sw)ds$ we get
 \begin{align*}
  I_\varepsilon(x,w) &=\left( \frac 1 {|\gamma_\varepsilon(x+w) - \gamma_\varepsilon(x)|^2 } - \frac 1 {d_{\gamma_\varepsilon}(x,x+w)^2} \right) \\ & = \frac {|w|^4}{|\gamma_\varepsilon(x+w) -\gamma_\varepsilon(x)| ^2 d_{\gamma_\varepsilon} (x,x+w)^2}  \\ & \quad \quad \quad \quad \quad \quad \quad \left( \frac { \int_{0}^1 \int_0^1 |\gamma_\varepsilon'(x+s_1 w)| |\gamma_\varepsilon'(x+s_2 w| - \langle \gamma_\varepsilon'(x+s_1 w) \gamma_\varepsilon'(x+s_2 w)\rangle ds_1 ds_2 }{ |w|^2}\right).
 \end{align*}
As all vectors $ a,b \in \mathbb R^n \setminus \{0\}$ satisfy
$$
 |a| |b| - \langle a,b \rangle = \frac {|a||b|}{2} \left|\frac a {|a|} - \frac b {|b|} \right|^2
$$
and
$$
   \left|\frac a {|a|} - \frac b {|b|} \right| \leq \frac {|a-b|}{|a|} + |b| \left| \frac 1 {|a|} - \frac 1 {|b|} \right|
  \leq \frac  {2 |a-b|} {|a|},
$$
we get
$$
 |a| |b| - \langle a,b \rangle \leq \frac {|b|} {|a|} |a-b|^2.
$$
Applying this inequality to $a=\gamma'(x+s_1 w)$ and $b = \gamma'(x+s_2 w)$ , we arrive at
\begin{equation} \label{eq:EstIntegrand}
 |I_{\gamma_\varepsilon}(x,w)| \leq C  \left( \frac { \int_{0}^1 \int_0^1 |\gamma'(x+s_1 w)-\gamma'(x+s_2 w)|^2ds_1ds_2}{ |w|^2}\right) := C \tilde I_{\gamma_\varepsilon} (x,w)
\end{equation}
for all $|w|\leq \frac 14$ and $\varepsilon >0$ small enough.
Let us now show that $\tilde I_{\gamma_\varepsilon}(x,w)$ converges to $\tilde I_{\gamma}(x,w)$ in $L^1(\mathbb R \times \mathbb Z \times [-\frac 12, \frac 12])$ which implies that $I_{\gamma_\varepsilon}(x,w) \leq \tilde I_{\gamma_\varepsilon}(x,w) $ is uniformly integrable. 

We calculate
\begin{align*}
  &\left|\left(\int_{\mathbb R / \mathbb Z} \int_{-\frac 1 2}^{\frac 1 2} {\frac 1 2}\frac{|\gamma_\varepsilon(x+w) - \gamma_\varepsilon (x)|^2}{w^2}dwdx \right) ^{\frac 1 2 } -  \left(\int_{\mathbb R / \mathbb Z} \int_{-\frac 1 2}^{\frac 1 2} \frac 1 2\frac{|\gamma(x+w) - \gamma (x)|^2}{w^2} dwdx \right) ^{\frac 1 2 }\right| \\
 & \leq  \left(\int_{\mathbb R / \mathbb Z} \int_{-\frac 1 2}^{\frac 1 2}  \frac{ |(\gamma_\varepsilon (x+w) - \gamma_\varepsilon (x) - (\gamma(x+w) - \gamma(x)) |^2} {w^2}dw dx  \right) ^{\frac 1 2}.
\end{align*}
Using the definition of the convolution, we can estimate this by
\begin{align*}
 & \quad \left( \int_{\mathbb R / \mathbb Z} \int_{-\frac 1 2}^{\frac 1 2}  \frac{  \int_{B_\varepsilon(0)} |\left(\gamma(x+w -y) - \gamma (x-y) - (\gamma(x+w) - \gamma(x))\right) \eta_\varepsilon(y) dy |^2} {w^2}dw dx  \right)^{\frac 1 2}\\
 &\leq \left( \int_{\mathbb R / \mathbb Z} \int_{-\frac 1 2}^{\frac 1 2}  \frac{ \int_{B_\varepsilon(0)} \left|\gamma(x+w -y) - \gamma(x-y) - (\gamma(x+w) - \gamma(x))\right|^2 \eta_\varepsilon(y) dy } {w^2}dw dx  \right)^ {\frac 1 2}
 \\ &= \left( \int_{B_\varepsilon(0)} \int_{\mathbb R / \mathbb Z} \int_{-\frac 1 2}^{\frac 1 2}  \frac{  \int_{B_\varepsilon(0)} \left|\gamma(x+w -y) - \gamma(x-y) - (\gamma(x+w) - \gamma(x))\right|^2 \eta_\varepsilon(y)  } {w^2}dw dx dy  \right)^ {\frac 1 2}
 \\ &\leq \left( \int_{\mathbb R / \mathbb Z} \int_{-\frac 1 2}^{\frac 1 2}  \frac{  \sup_{y \in B_\varepsilon(0)} \left|\gamma(x+w -y) - \gamma (x-y) - (\gamma(x+w) - \gamma(x))\right|^2 } {w^2}dw dx dy  \right)^ {\frac 1 2}.
\end{align*}
Clearly for all $\gamma \in C^\infty$ the above integral converges to $0$ for $\varepsilon \rightarrow 0$, as we can use Taylor's approximation twice to estimate it further by
\begin{align*}
 &\leq C \left( \int_{\mathbb R / \mathbb Z} \int_{-\frac 1 2}^{\frac 1 2}    \sup_{y \in B_\varepsilon(0)} \left| \int_0^1 \left( \gamma'(x-y+\tau w) - \gamma ' (x+\tau w)\right) d \tau \right|^2 dw dx dy  \right)^ {\frac 1 2} \\
 & \leq C \|\gamma''\|_{L^\infty} \varepsilon.
\end{align*}

For $\gamma \in W^{\frac 32,2}$ and $\delta >0$, we can find $\tilde \gamma$ with
$$
 \|\gamma- \tilde \gamma\|_{W^{\frac 32 ,2}} \leq \delta.
$$
Hence,
\begin{align*}
& \left( \int_{\mathbb R / \mathbb Z} \int_{-\frac 1 2}^{\frac 1 2}  \frac{  \sup_{y \in B_\varepsilon(0)} \left|\gamma(x+w -y) - \gamma (x-y) - (\gamma(x+w) - \gamma(x))\right|^2 } {w^2}dw dx dy  \right)^ {\frac 1 2} \\ &
\leq \left( \int_{\mathbb R / \mathbb Z} \int_{-\frac 1 2}^{\frac 1 2}  \frac{  \sup_{y \in B_\varepsilon(0)} \left|\tilde \gamma(x+w -y) - \tilde \gamma (x-y) - (\tilde \gamma(x+w) - \tilde \gamma(x))\right|^2 } {w^2}dw dx dy  \right)^ {\frac 1 2} 
\\ & + 2 \left( \int_{\mathbb R / \mathbb Z} \int_{-\frac 1 2}^{\frac 1 2}  \frac{  \sup_{y \in B_\varepsilon(0)} \left|\gamma(x+w) - \gamma (x) - (\tilde \gamma(x+w) - \tilde \gamma(x))\right|^2 } {w^2}dw dx dy  \right)^ {\frac 1 2}
\\ & = o(\varepsilon) + 2\delta
\end{align*}
for all $\delta >0$.
We conclude that
$$
 \tilde I_{\gamma_{\varepsilon}} \rightarrow \tilde I_\gamma
$$
in $L^1$.

This shows that the family of functions $I_{\gamma_\varepsilon}$ is uniformly integrable. Hence, Vitali's theorem implies
$E(\gamma_{\varepsilon}) \xrightarrow {\varepsilon \rightarrow 0} E(\gamma).$
\end{proof}

\subsubsection{Ishizeki's and Nagasawa's Decomposition of the M\"obius Energy. }
In \cite{Ishizeki2015} Ishizeki and Nagasawa found the decomposition
$$
 E_{\text{m\"ob}} (\gamma) = E^1 (\gamma) + E^2 (\gamma) + 4
$$
of the M\"obius energy where
$$
 E^1 (\gamma) := \iint_{(\mathbb R / \mathbb Z)^2} \frac {|\tau (x)- \tau(y)|^2}{2 |\gamma(x) - \gamma(y)|^2} |\gamma'(x_1) | |\gamma'(x_2)| dx_1 dx_2
$$
$\tau = \frac {\gamma'}{|\gamma'|}$ and
$$
 E^2 (\gamma) := \int_{(\mathbb R / \mathbb Z)^2} \frac 2 {|\gamma(x) - \gamma(y) |^2} \det
\begin{pmatrix}
 \tau (x) \cdot \tau (y) & (\gamma(x) - \gamma(y)) \cdot \tau (x)) \\
 (\gamma(x) - \gamma(y)) \cdot \tau (y)) & |\gamma(x) - \gamma(y)|^2
\end{pmatrix} |\gamma'(x)| |\gamma'(y)| dx dy.
$$
As in the proof of Lemma~\ref{lem:ConvOfMoebiusEnergy}, we can show
\begin{lemma} \label{lem:ConvOfE1E2}
 Let $\gamma \in C^{0,1}(\mathbb R / \mathbb Z, \mathbb R^n)$ be a curve of bounded M\"obius energy.
 Then $$ \lim_{\varepsilon \rightarrow 0} E^1(\gamma_\varepsilon)  = E^1 (\gamma) \quad \quad \text{ and } \quad \quad \quad \lim_{\varepsilon \rightarrow 0} E^2(\gamma_\varepsilon)  = E^2 (\gamma). $$
\end{lemma}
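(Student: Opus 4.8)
The plan is to run, for each of the two functionals separately, exactly the Vitali-convergence scheme used for Lemma~\ref{lem:ConvOfMoebiusEnergy}: establish pointwise convergence of the integrands almost everywhere, produce a dominating family that is uniformly integrable, and then invoke Vitali's theorem. First I would record the pointwise convergence. By Theorem~\ref{thm:ApproximatingVMOCurves} we have $|\gamma_\varepsilon'| \to 1$ uniformly, while $\gamma_\varepsilon' = \gamma' \ast \eta_\varepsilon \to \gamma'$ almost everywhere (at every Lebesgue point) and $\gamma_\varepsilon \to \gamma$ uniformly. Hence the unit tangents $\tau_\varepsilon = \gamma_\varepsilon'/|\gamma_\varepsilon'|$ converge to $\tau = \gamma'$ almost everywhere, and, since $\gamma$ is bi-Lipschitz by Theorem~\ref{thm:EnergySpace}, the unit chord direction $(\gamma_\varepsilon(x)-\gamma_\varepsilon(y))/|\gamma_\varepsilon(x)-\gamma_\varepsilon(y)|$ converges off the diagonal. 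Thus the integrands defining $E^1(\gamma_\varepsilon)$ and $E^2(\gamma_\varepsilon)$ converge pointwise almost everywhere to those of $E^1(\gamma)$ and $E^2(\gamma)$. Throughout I would use, as in Lemma~\ref{lem:ConvOfMoebiusEnergy}, that $\gamma_\varepsilon$ is uniformly bi-Lipschitz for small $\varepsilon$, so that away from the diagonal all integrands are bounded uniformly.

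For $E^2$ the decisive point is an algebraic cancellation. Writing $P = |\gamma(x)-\gamma(y)|$ and expanding the determinant, the integrand is
\[
 \frac{2}{P^2}\Bigl((\tau(x)\cdot\tau(y))\,P^2 - ((\gamma(x)-\gamma(y))\cdot\tau(x))((\gamma(x)-\gamma(y))\cdot\tau(y))\Bigr)|\gamma'(x)||\gamma'(y)|,
\]
which simplifies to
\[
 2\Bigl(\tau(x)\cdot\tau(y) - (\hat P\cdot\tau(x))(\hat P\cdot\tau(y))\Bigr)|\gamma'(x)||\gamma'(y)|, \qquad \hat P := \frac{\gamma(x)-\gamma(y)}{P}.
\]
Since $|\tau| = |\hat P| = 1$, the bracket is bounded by $2$ in absolute value, and $|\gamma_\varepsilon'(x)||\gamma_\varepsilon'(y)|$ is uniformly bounded for small $\varepsilon$. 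Thus the $E^2$-integrands are uniformly bounded on the compact domain $(\mathbb{R}/\mathbb{Z})^2$, uniform integrability is automatic, and Vitali's theorem (here just dominated convergence) gives $E^2(\gamma_\varepsilon) \to E^2(\gamma)$ with essentially no further work.

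For $E^1$ I would recover the dominating function from Lemma~\ref{lem:ConvOfMoebiusEnergy}. Near the diagonal I use the estimate $|a/|a| - b/|b|| \le 2|a-b|/|a|$ established there, with $a = \gamma_\varepsilon'(x)$ and $b = \gamma_\varepsilon'(y)$; since $|\gamma_\varepsilon'| \ge \tfrac12$ for small $\varepsilon$ this yields $|\tau_\varepsilon(x)-\tau_\varepsilon(y)| \le 4\,|\gamma_\varepsilon'(x)-\gamma_\varepsilon'(y)|$. Combining this with the uniform lower bi-Lipschitz bound $|\gamma_\varepsilon(x)-\gamma_\varepsilon(y)| \ge c\,|x-y|$ and the upper bound on $|\gamma_\varepsilon'|$ bounds the $E^1$-integrand by $C\,|\gamma_\varepsilon'(x)-\gamma_\varepsilon'(y)|^2/|x-y|^2$, a Gagliardo-type integrand of the same kind as $\tilde I_{\gamma_\varepsilon}$ in \eqref{eq:EstIntegrand}. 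Because $\gamma_\varepsilon' \to \gamma'$ in $W^{\frac12,2}$, this family converges in $L^1$ (this is precisely the type of $L^1$-convergence proved in the proof of Lemma~\ref{lem:ConvOfMoebiusEnergy}), and a family dominated by an $L^1$-convergent sequence is uniformly integrable. Vitali's theorem then gives $E^1(\gamma_\varepsilon) \to E^1(\gamma)$.

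The step I expect to be the main obstacle is the $E^1$ part: one must secure the uniform-in-$\varepsilon$ lower bi-Lipschitz bound for $\gamma_\varepsilon$ near the diagonal, which rests on $|\gamma_\varepsilon'| \to 1$ together with the smallness of the mean oscillation of $\gamma_\varepsilon'$, and one must justify the domination so that the $L^1$-convergence established in Lemma~\ref{lem:ConvOfMoebiusEnergy} applies verbatim. The functional $E^2$, once the determinant is rewritten through the cancellation above, is comparatively effortless. As a consistency check, the identity $E_{\text{m\"ob}} = E^1 + E^2 + 4$ together with Lemma~\ref{lem:ConvOfMoebiusEnergy} shows that convergence of any two of $E_{\text{m\"ob}}, E^1, E^2$ forces the third; in particular the bounded functional $E^2$ could alternatively be combined with Lemma~\ref{lem:ConvOfMoebiusEnergy} to deduce the convergence of $E^1$.
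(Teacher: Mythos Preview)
Your argument is correct, and for $E^1$ it coincides with the paper's proof: bound the integrand by $C\,|\gamma'_\varepsilon(x)-\gamma'_\varepsilon(y)|^2/|x-y|^2$ via the uniform bi-Lipschitz estimate and the tangent-difference inequality, then appeal to the uniform integrability of this Gagliardo integrand established in the proof of Lemma~\ref{lem:ConvOfMoebiusEnergy}. Where you differ is in the treatment of $E^2$. You expand the determinant, cancel the factor $|\gamma(x)-\gamma(y)|^2$, and observe that the resulting integrand $2\bigl(\tau(x)\!\cdot\!\tau(y)-(\hat P\!\cdot\!\tau(x))(\hat P\!\cdot\!\tau(y))\bigr)|\gamma'(x)||\gamma'(y)|$ is uniformly bounded, so dominated convergence suffices. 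The paper instead proves only the $E^1$ convergence directly and then deduces the $E^2$ statement from the Ishizeki--Nagasawa decomposition $E_{\text{m\"ob}} = E^1 + E^2 + 4$ combined with Lemma~\ref{lem:ConvOfMoebiusEnergy}. Your route for $E^2$ is more self-contained, since it does not invoke the decomposition theorem as an external input; the paper's route is shorter once $E^1$ is in hand, since nothing about $E^2$ needs to be computed at all. You in fact note this alternative yourself in your final consistency check.
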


\begin{proof}
It is enough to show the convergence for $E^1$, as the statement for $E^2$ follows from the decomposition
$$ E_{\text{m\"ob}} = E^1 + E^2 + 4$$
by Ishizeki and Nagasawa \cite{Ishizeki2015}.
As $\gamma$ has bounded M\"obius energy, we know that $\gamma'\in VMO$. Theorem~\ref{thm:ApproximatingVMOCurves} shows that the integrand in the definition of $E^1$ converges pointwise.
From the bi-Lipschitz estimate we furthermore get
\begin{equation} \label{eq:BetterIntegrand}
  \frac {|\tau_\varepsilon (x)- \tau_\varepsilon(y)|^2}{2 |\gamma_\varepsilon(x) - \gamma_\varepsilon(y)|^2} |\gamma_\varepsilon'(x_1) | |\gamma_\varepsilon'(x_2)| \leq C \frac {| \gamma'_\varepsilon (x) - \gamma'_\varepsilon(y)|^2 }{|x-y|^2}.
\end{equation}
We have shown in the proof of Lemma~\ref{lem:ConvOfMoebiusEnergy} that the right-hand side in uniformly integrable --
and thus the integrands in the definition of $E^1$ are uniformly integrable and Vitali's theorem implies the assertion.
\end{proof}

\subsection{Proof of a Conjecture of Ishizeki and Nagasawa}

In \cite{Ishizeki2015}, Ishizeki and Nagasawa proved that for all curves $\gamma$ in $C^{1,1}$ we have $E^1 (\gamma) \geq 2 \pi^2$ and conjectured that the same is also true under the weaker but more natural condition $\gamma \in W^{\frac 32,2}$. Using the techniques we developed so far, we can now prove this conjecture quite easily.

\begin{theorem} [A conjecture of Nagasawa and Ishizeki] \label{thm:ConjectureNagasawa}
 We have $E^1 (\gamma) \geq 2 \pi^2$ for all regular curves $\gamma \in W^{\frac 3 2 ,2}(\mathbb R / \mathbb Z , \mathbb R^3)$.
\end{theorem}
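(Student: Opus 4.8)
The plan is to deduce the inequality from Ishizeki and Nagasawa's $C^{1,1}$ estimate by an approximation argument, using the two facts already at our disposal: the mollified curves $\gamma_\varepsilon$ are smooth and regular (Theorem~\ref{thm:ApproximatingVMOCurves}), and $E^1(\gamma_\varepsilon)\to E^1(\gamma)$ whenever $\gamma$ has finite M\"obius energy (Lemma~\ref{lem:ConvOfE1E2}).

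First I would reduce to a convenient parametrization. Since both the measures $|\gamma'|\,dx$ and the unit tangent $\tau=\gamma'/|\gamma'|$ appearing in the integrand are geometric, $E^1$ is invariant under reparametrization, and a short scaling check shows it is also invariant under $\gamma\mapsto\lambda\gamma$. A regular curve has speed bounded above and away from zero, so its arc-length reparametrization is a bi-Lipschitz change of the parameter; transforming the Gagliardo seminorm of $\tau$ under this change only introduces bounded Jacobian factors, so the reparametrized curve still lies in $W^{\frac32,2}$. Hence, after rescaling to length one, we may assume $\gamma\colon\mathbb R/\mathbb Z\to\mathbb R^3$ is parametrized by arc-length with $|\gamma'|\equiv 1$. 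In particular $\gamma'\in W^{\frac12,2}=W^{s,\frac1s}$ with $s=\frac12$, so $\gamma'\in VMO$ by the line of inequalities established above, and Theorem~\ref{thm:ApproximatingVMOCurves} is applicable.

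The heart of the argument is the case in which $\gamma$ is bi-Lipschitz, which by Theorem~\ref{thm:EnergySpace} is exactly the case of finite M\"obius energy. Here Theorem~\ref{thm:ApproximatingVMOCurves} produces smooth regular curves $\gamma_\varepsilon=\gamma\ast\eta_\varepsilon$ with $|\gamma_\varepsilon'|\to1$ uniformly. Being smooth, each $\gamma_\varepsilon$ is in particular of class $C^{1,1}$, so the Ishizeki--Nagasawa estimate gives $E^1(\gamma_\varepsilon)\ge 2\pi^2$ for every sufficiently small $\varepsilon$. Since $\gamma$ has finite M\"obius energy, Lemma~\ref{lem:ConvOfE1E2} yields $E^1(\gamma_\varepsilon)\to E^1(\gamma)$, and passing to the limit in the inequality gives $E^1(\gamma)\ge 2\pi^2$.

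It remains to treat curves that are not bi-Lipschitz, and this is where I expect the main obstacle to lie, because Lemma~\ref{lem:ConvOfE1E2} (and the bi-Lipschitz estimate \eqref{eq:BetterIntegrand} behind it) is unavailable. My plan is to show that in this case $E^1(\gamma)=+\infty$, so that the inequality holds trivially. If $\gamma$ is regular but not bi-Lipschitz, a compactness argument produces parameters $x_*\ne y_*$ with $\gamma(x_*)=\gamma(y_*)$; choosing them to be Lebesgue points of $\gamma'$ and using the Lipschitz bound $|\gamma(x)-\gamma(y)|\le |x-x_*|+|y-y_*|$ together with the $VMO$ control on $\tau$, one estimates the $E^1$-integrand from below by a multiple of $(|x-x_*|+|y-y_*|)^{-2}$ on a neighbourhood of $(x_*,y_*)$ whenever the two tangent directions are distinct, forcing divergence of the planar integral. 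The genuinely delicate point is a tangential self-contact, where $|\tau(x)-\tau(y)|^2\to0$ near $(x_*,y_*)$ and the crude lower bound is lost; here one must exploit the $W^{\frac12,2}$-modulus of $\tau$ to compare the rate at which the numerator degenerates with the rate at which $|\gamma(x)-\gamma(y)|$ vanishes. Alternatively, one can note that the matrix entries make the $E^2$-integrand pointwise bounded, its value being $2\bigl(\tau(x)\cdot\tau(y)-(u\cdot\tau(x))(u\cdot\tau(y))\bigr)$ with $u$ the unit vector along $\gamma(x)-\gamma(y)$, so $E^2$ stays finite, and then the decomposition $E_{\text{m\"ob}}=E^1+E^2+4$ forces $E^1=+\infty$ once $E_{\text{m\"ob}}=+\infty$.
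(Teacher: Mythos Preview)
Your core argument coincides with the paper's: mollify, apply the Ishizeki--Nagasawa $C^{1,1}$ inequality to the smooth curves $\gamma_\varepsilon$, and pass to the limit using the convergence $E^1(\gamma_\varepsilon)\to E^1(\gamma)$ from Lemma~\ref{lem:ConvOfE1E2}. The paper's proof consists of exactly these three lines and nothing more.

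Where you go beyond the paper is in the case distinction. The paper simply invokes $E^1(\gamma_\varepsilon)\to E^1(\gamma)$ without comment, although Lemma~\ref{lem:ConvOfE1E2} is stated only under the hypothesis of bounded M\"obius energy (equivalently, by Theorem~\ref{thm:EnergySpace}, bi-Lipschitz). Your extra paragraph disposing of the non-bi-Lipschitz case is therefore a genuine addition. Of the two routes you sketch, the second one is the clean and correct one: expanding the determinant shows that the $E^2$-integrand equals $2\bigl(\tau(x)\!\cdot\!\tau(y)-(u\!\cdot\!\tau(x))(u\!\cdot\!\tau(y))\bigr)$ with $|u|=1$, hence is pointwise bounded by $4$, so $E^2$ is finite for any regular curve; since a non-bi-Lipschitz arc-length curve has $E_{\text{m\"ob}}=+\infty$, the decomposition forces $E^1=+\infty$ and the inequality is vacuous. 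Your first route (a direct lower bound near a self-contact) is, as you yourself note, delicate at tangential self-intersections and is unnecessary once the $E^2$ observation is available. The preliminary reparametrization paragraph is harmless but not strictly needed, since the paper's machinery is already set up for arc-length curves.
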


\begin{proof}
 Let $\gamma_\varepsilon  = \gamma \ast \eta_\varepsilon$. Since 
 \begin{align*}
  E^1 (\gamma_\varepsilon) \rightarrow E^1 ( \gamma)
 \end{align*}
 and $E(\gamma_\varepsilon) \geq 2\pi^2$ as the inequality holds for smooth curves, we get 
 $ E^1(\gamma) \geq 2 \pi^2.$
\end{proof}

In the same paper, Ishizeki and Nagasawa also showed the M\"obius invariance of the energies $E^1$ and $E^2$ for curves of bounded M\"obius energy except for one important case: the case of an inversion on a sphere centered on the curve. For applications this seems to be one of the most important cases.  We can now prove also this last case -- and thus obtain full M\"obius invariance of the energies $E^1, E^2$ for curves of bounded M\"obius energy.

\begin{theorem} \label{thm:MoebiusInvariance}
 Let $\gamma \in C^{0,1}(\mathbb R / \mathbb Z)$ be a regular curve with bounded M\"obius energy and $I$ be an inversion on a sphere centered on $\gamma$. Then
 $$
  E^1(I\circ \gamma) = E^1 (\gamma)- 2 \pi^2\quad \text{ and } \quad  E^2(I \circ \gamma) = E^2(\gamma) + 2 \pi^2.
 $$ 
\end{theorem}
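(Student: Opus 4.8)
The plan is to follow the template of the proofs of Lemma~\ref{lem:ConvOfE1E2} and Theorem~\ref{thm:ConjectureNagasawa}: reduce to smooth curves, where Ishizeki and Nagasawa \cite{Ishizeki2015} already establish the two identities, and then pass to the limit along the mollifications $\gamma_\varepsilon=\gamma\ast\eta_\varepsilon$ by means of Vitali's theorem. Write $p=\gamma(t_0)$ for the centre of $I$, which by assumption lies on the curve. The decisive point is the choice of centre for the approximating problem: since $p$ will in general \emph{not} lie on $\gamma_\varepsilon$, applying $I$ to $\gamma_\varepsilon$ would produce a bounded closed curve, for which the already established invariance under regular M\"obius maps yields no correction at all. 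I would therefore let $I_\varepsilon$ be the inversion on the sphere of the same radius centred at $p_\varepsilon:=\gamma_\varepsilon(t_0)$, which does lie on $\gamma_\varepsilon$. Then $I_\varepsilon\circ\gamma_\varepsilon$ is the inversion of a smooth curve about a point on it, so the smooth result of \cite{Ishizeki2015} applies and gives
\begin{equation*}
 E^1(I_\varepsilon\circ\gamma_\varepsilon) = E^1(\gamma_\varepsilon) - 2\pi^2, \qquad E^2(I_\varepsilon\circ\gamma_\varepsilon) = E^2(\gamma_\varepsilon) + 2\pi^2 .
\end{equation*}
Since $E^1,E^2$ are reparametrization invariant, all these energies may be computed in the parametrisation $x\mapsto I_\varepsilon(\gamma_\varepsilon(x))$ over $\mathbb R/\mathbb Z$, with a singularity at $x=t_0$ where the curve escapes to infinity.

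The right-hand sides converge, $E^1(\gamma_\varepsilon)\to E^1(\gamma)$ and $E^2(\gamma_\varepsilon)\to E^2(\gamma)$, by Lemma~\ref{lem:ConvOfE1E2}. Thus everything reduces to proving $E^1(I_\varepsilon\circ\gamma_\varepsilon)\to E^1(I\circ\gamma)$ and $E^2(I_\varepsilon\circ\gamma_\varepsilon)\to E^2(I\circ\gamma)$. Pointwise convergence of the integrands is not the issue: as $\gamma_\varepsilon\to\gamma$ uniformly, $p_\varepsilon\to p$, $\gamma_\varepsilon'\to\gamma'$ almost everywhere and $|\gamma_\varepsilon'|\to1$ uniformly by Theorem~\ref{thm:ApproximatingVMOCurves}, the curves $I_\varepsilon\circ\gamma_\varepsilon$ and their unit tangents converge to those of $I\circ\gamma$ for almost every $x\neq t_0$, so the $E^1$- and $E^2$-integrands converge almost everywhere on $(\mathbb R/\mathbb Z)^2$. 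The real task is uniform integrability, and this is where the main obstacle lies.

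I would split the domain into the region away from the centre, $\{|x-t_0|\geq\rho\text{ and }|y-t_0|\geq\rho\}$, and a neighbourhood of $t_0$. On the former, bi-Lipschitz continuity of $\gamma$ (Theorem~\ref{thm:EnergySpace}) keeps $\gamma(\{|x-t_0|\geq\rho\})$ at a positive distance from $p$; since $p_\varepsilon\to p$, the map $I_\varepsilon$ is for small $\varepsilon$ a uniformly bi-Lipschitz conformal diffeomorphism of a fixed neighbourhood of this set, so conformality together with the bi-Lipschitz estimate for $\gamma_\varepsilon$ bounds the integrands of $I_\varepsilon\circ\gamma_\varepsilon$ by a constant multiple of $|\gamma_\varepsilon'(x)-\gamma_\varepsilon'(y)|^2/|x-y|^2$, precisely the comparison integrand shown to be uniformly integrable in the proof of Lemma~\ref{lem:ConvOfMoebiusEnergy}. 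The genuinely delicate region is the neighbourhood of $t_0$, where $I_\varepsilon\circ\gamma_\varepsilon$ runs off to infinity and the integrand becomes large; this region is responsible for the correction terms $\mp2\pi^2$, and I must show that the corresponding mass neither concentrates at $t_0$ nor escapes as $\varepsilon\to0$, uniformly in $\varepsilon$. This is exactly why the centre was placed on $\gamma_\varepsilon$: the approximants then share the same singular structure at $t_0$ as the limit $I\circ\gamma$, so that the correct amount of energy is captured in the limit rather than lost through a defect of convergence. Quantitatively I expect to exploit the conformal covariance of the $E^1$-integrand, the uniform rate $|\gamma_\varepsilon'|\to1$ from Theorem~\ref{thm:ApproximatingVMOCurves}, and the $VMO$-modulus of $\gamma'$ to dominate the near-diagonal contribution around $t_0$ by the tail of a uniformly integrable family; making this bound uniform down to the scale $\varepsilon$ is the crux.

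Finally, once the limit for $E^1$ is established, the statement for $E^2$ follows either from the identical Vitali argument applied to the $E^2$-integrand, or, more economically, by combining the $E^1$ identity with the decomposition $E_{\text{m\"ob}}=E^1+E^2+4$ of \cite{Ishizeki2015} and the known transformation behaviour of the full M\"obius energy under an inversion centred on the curve \cite{Freedman1994}. Passing to the limit $\varepsilon\to0$ in the two displayed smooth identities then yields $E^1(I\circ\gamma)=E^1(\gamma)-2\pi^2$ and $E^2(I\circ\gamma)=E^2(\gamma)+2\pi^2$, completing the proof.
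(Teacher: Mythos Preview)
Your overall strategy coincides with the paper's: mollify, apply the smooth Ishizeki--Nagasawa identities to the approximants (with the centre shifted onto $\gamma_\varepsilon$), let the right-hand sides converge by Lemma~\ref{lem:ConvOfE1E2}, and then pass to the limit in $E^1(I_\varepsilon\circ\gamma_\varepsilon)$ via Vitali. You also correctly isolate the only real difficulty, namely uniform integrability near the puncture $t_0$ (equivalently, no loss of mass at infinity for the inverted curves). However, your treatment of this point is not a proof but a list of ingredients you \emph{hope} will suffice (``conformal covariance'', ``$VMO$-modulus'', ``uniform rate $|\gamma_\varepsilon'|\to1$''); none of these by themselves gives a quantitative tail bound for the $E^1$-integrand of $I_\varepsilon\circ\gamma_\varepsilon$ that is uniform in $\varepsilon$, and it is precisely here that the argument must do work.

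The paper closes this gap with a concrete mechanism you do not mention. It first reparametrises $I\circ(\gamma_\varepsilon+x_\varepsilon)$ by arc-length to obtain curves $\tilde\gamma_\varepsilon:\mathbb R\to\mathbb R^n$, and then uses the \emph{known} M\"obius invariance of the full energy $E_{\text{m\"ob}}$ (Freedman--He--Wang) together with Fatou's lemma to show $E_{\text{m\"ob}}(\tilde\gamma_\varepsilon)\to E_{\text{m\"ob}}(\tilde\gamma)$. From this convergence of the total energy and lower semicontinuity on bounded windows it extracts, for every $\delta>0$, a radius $R$ with
\[
\iint_{\mathbb R^2\setminus B_R^2}\!\Big(\tfrac{1}{|\tilde\gamma_\varepsilon(x)-\tilde\gamma_\varepsilon(y)|^2}-\tfrac{1}{|x-y|^2}\Big)\,dx\,dy\le 2\delta
\]
for all small $\varepsilon$, and then a pointwise inequality converts this into a uniform tail bound for the Gagliardo integrand $|\tilde\gamma_\varepsilon'(x)-\tilde\gamma_\varepsilon'(y)|^2/|x-y|^2$ outside $B_R$. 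That is the missing idea: the tightness at infinity for $E^1$ is \emph{borrowed} from the already-proved invariance of $E_{\text{m\"ob}}$, not obtained from $VMO$ or conformal bookkeeping alone. On the compact piece $B_R\times B_R$ the paper then argues exactly as you propose, via the bi-Lipschitz reparametrisation and the uniform integrability of $|\gamma_\varepsilon'(x)-\gamma_\varepsilon'(y)|^2/|x-y|^2$. You should insert this use of $E_{\text{m\"ob}}$ (which you invoke only at the very end to pass from $E^1$ to $E^2$) into the heart of the uniform-integrability step; without it, the ``crux'' remains an unfilled gap.
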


\begin{proof} We will show how to deduce this theorem form the M\"obius invariance for smooth curves and the invariance of the M\"obius energy. We only have to show the statement for $E^1$ as due to a theorem of Ishizeki and Nagasawa the sum
\begin{equation*}
  E^1 + E^2 
\end{equation*}
is known to be invariant under all M\"obius transformations \cite{Ishizeki2014}.

Let us assume that $\gamma$ is parametrized by arc length. We set $\gamma_\varepsilon := \gamma \ast \eta_\varepsilon$ and assume without loss of generality, that $0$ is the center of the inversion $I$. Then we can find $x_\varepsilon \rightarrow 0$ such that
 $0 \in \gamma_\varepsilon(\mathbb R / \mathbb Z) + x_\varepsilon.$
Let us denote by $\tilde \gamma_\varepsilon: \mathbb R \rightarrow \mathbb R^n$  a re-parameterization of $I \circ \gamma_\varepsilon$ by arc-length
such that $\tilde \gamma_\varepsilon (0)= (I \circ \gamma_{\varepsilon})(0)$ and let $\tilde \gamma: \mathbb R \rightarrow \mathbb R^n$  a re-parameterization of $I \circ \gamma$ by arc-length
such that $\tilde \gamma(0)= (I \circ \gamma)(0)$.
Then $\tilde \gamma_\varepsilon$ converges pointwise to $\tilde \gamma$.
 
The proof now relies on the following 
 
 \begin{claim} \label{claim}
  We have $$\lim_{\varepsilon \rightarrow 0}\lfloor \tilde \gamma'_\varepsilon - \tilde \gamma'\rfloor_{W^{\frac 1 2,2}(\mathbb R)} =0 ,$$
  where
  $$
   \lfloor \tilde \gamma'_\varepsilon - \tilde \gamma'\rfloor_{W^{\frac 1 2,2}(\mathbb R)}  := \left( \int_{\mathbb R} \int_{\mathbb R} \frac {|f(x) -f(y)|^2} {|x-y|^2} dx dy \right)^{\frac 1 2}
  $$
  denotes the Gagliardo semi-norm on $\mathbb R.$
 \end{claim}

 Let us prove the statement for $E^1$ in our theorem using this claim.
 On the one hand Lemma~\ref{lem:ConvOfE1E2} and the M\"obius invariance for smooth curves implies
 \begin{equation} \label{eq:Conv1}
   E^1(\tilde \gamma_\varepsilon)  + 2 \pi^2  = E^1(I \circ (\gamma_\varepsilon + x_\varepsilon))  + 2 \pi^2 = E^1 (\gamma_\varepsilon + x_\varepsilon) \xrightarrow{\varepsilon \downarrow 0} E^1(\gamma) .
 \end{equation}
 On the other hand, we use the estimate
\begin{equation*}
  \frac {|\tilde \tau_\varepsilon (x)- \tilde \tau_\varepsilon(y)|^2}{2 |\tilde \gamma_\varepsilon(x) - \tilde \gamma_\varepsilon(y)|^2} |\tilde \gamma_\varepsilon'(x_1) | |\tilde \gamma_\varepsilon'(x_2)| \leq C \frac {| \tilde \gamma'_\varepsilon (x) -  \tilde \gamma'_\varepsilon(y)|^2 }{|x-y|^2}
\end{equation*}
and follow the argument in the proof of Lemma~\ref{lem:ConvOfMoebiusEnergy} to see that the integrands in the definition of the energies $E^1(\tilde \gamma_\varepsilon)$ satisfy 
the assumptions of Vitali's theorem. Hence, 
\begin{equation} \label{eq:Conv2}
 \lim_{\varepsilon \rightarrow 0} E^{1} (\tilde \gamma_\varepsilon) = E^1 (\tilde \gamma).
\end{equation}
But \eqref{eq:Conv1} and \eqref{eq:Conv2} imply
\begin{equation*}
 E^1(\tilde \gamma) + 2 \pi^2 = E(\gamma).
\end{equation*}
\end{proof}

\begin{proof}[Proof of Claim~\ref{claim}] 
  We will show that the integrands appearing in the definition of 
  $$
   \lfloor \tilde \gamma_\varepsilon - \tilde \gamma \rfloor_{W^{\frac 1 2,2} (\mathbb R)} 
  $$
  converge pointwise to $0$ and are uniformly integrable. Then the claim follows from Vitali's theorem.  These integrands are
  $$
   \frac {|(\tilde \gamma_\varepsilon (x) - \tilde \gamma(x))-(\tilde \gamma_\varepsilon(y) - \tilde \gamma(y))|^2}{|x-y|^2}.
  $$

  As $\tilde \gamma_\varepsilon$ converges pointwise to $\tilde \gamma$, this integrands converge pointwise to $0$ for all $x \not=y$.
  
  Let us now first deal with the point $\infty$ and show that for every $\delta >0$ there is an $R>0$ such that
 \begin{equation} \label{eq:noConcentrationInfinity}
  \iint_{\mathbb R^2 \setminus (B_R)^2} \frac {|\tilde \gamma'_\varepsilon (x) - \tilde \gamma'_\varepsilon (y)|^2}{|x-y|^2} dx dy \leq \delta
 \end{equation}
 for all $\varepsilon >0$ small enough. 
 For this we use the M\"obius invariance of M\"obius energy \cite[Theorem~2.1]{Freedman1994}. Together with Fatou's lemma the latter implies
 $$
  E_{\text{m\"ob}}(\tilde \gamma) \leq \lim_{\varepsilon \rightarrow 0} E_{\text{m\"ob}} (\tilde \gamma_\varepsilon) = \lim_{\varepsilon \rightarrow 0} E_{\text{m\"ob}}( \gamma_\varepsilon) - 4 = E_{\text{m\"ob}}(\gamma)-4 = E(\tilde \gamma).
 $$
 Hence, 
 \begin{equation} \label{eq:noConcentration2}\lim_{\varepsilon \rightarrow 0} E _{\text{m\"ob}}(\tilde \gamma_\varepsilon ) = E_{\text{m\"ob}}(\tilde \gamma).\end{equation}
 For $\delta >0$ we now choose $R>0$ such that
 $$
  E_{B_R(0)}(\tilde \gamma) := \int_{B_R(0)}\int_{B_R(0)} \left( \frac 1 {|\tilde \gamma(x) - \tilde \gamma(y)|^2} - \frac 1 {|x-y|^2} \right) d x dy  \geq E(\tilde \gamma) - \delta.
 $$
 Then 
 $$
  E_{B_R(0)}(\tilde \gamma_\varepsilon) \geq E_{\text{m\"ob}}(\tilde \gamma) - 2 \delta
 $$
 for $\varepsilon >0$ small enough since else the lower semi-continuity of the M\"obius energy would imply
 $$
  E_{B_R(0)}(\tilde \gamma) \leq \liminf_{\varepsilon \rightarrow 0} E_{B_R(0)}(\tilde \gamma_\varepsilon) 
  \leq E_{\text{m\"ob}}(\tilde \gamma) - 2 \delta \leq E_{B_R(0)}(\tilde \gamma) - \delta  
 $$
 In view of \eqref{eq:noConcentration2} we even obtain
 \begin{equation*}
    E_{\text{m\"ob}}(\tilde \gamma) + 2 \delta \geq E_{\text{m\"ob}} (\tilde \gamma_\varepsilon) \geq E_{B_R(0)}(\tilde \gamma_\varepsilon) \geq E_{\text{m\"ob}}(\tilde \gamma) - 2 \delta
 \end{equation*}
 for all $\varepsilon >0$ sufficiently small and hence
 \begin{equation}
  \iint_{\mathbb R^2 \setminus (B_R(0))^2}  \left( \frac 1 {|\tilde \gamma_\varepsilon (x) - \tilde \gamma_\varepsilon(y)|^2} - \frac 1 {|x-y|^2} \right) dx dy = E_{\text{m\"ob}} (\tilde \gamma_\varepsilon) -  E_{B_R(0)}(\tilde \gamma_\varepsilon) \leq 2 \delta. 
 \end{equation}
 So the energy does not concentrate at the point infinity. Let us confer this into an statment for the Gagliardo semi-norm.
 
 One estimates
 \begin{align*}
  \frac 1 {|\tilde \gamma_\varepsilon (x) - \tilde \gamma_\varepsilon(y)|^2} - \frac 1 {|x-y|^2} & = \frac {|x-y|^2}{|\tilde \gamma_\varepsilon (x) - \tilde \gamma_\varepsilon (y)|^2 } \left(\frac {1-\frac {|\tilde \gamma_\varepsilon (x) - \tilde \gamma_\varepsilon(y)|^2}{|x-y|^2}} {|x-y|^2} \right)
  \\ & \geq \frac {1- \int_0^1 \int_0^1 \langle \tilde \gamma'_\varepsilon (x+t_1(y-x)), \tilde \gamma'_\varepsilon (x+t_2(y-x)) \rangle dt_1 dt_2 }{|x-y|^2} 
  \\ & = \frac 1 2 \frac { \int_0^1 \int_0^1 | \tilde \gamma'_\varepsilon (x+t_1(y-x)) - \tilde \gamma'_\varepsilon (x+t_2(y-x)) |^2 dt_1 dt_2 }{|x-y|^2}
 \end{align*}
 and hence
 \begin{equation} \label{eq:noConcentration3}
  \begin{aligned}
  \iint_{\mathbb R^2 \setminus (B_R(0))^2} &  \left( \frac 1 {|\tilde \gamma_\varepsilon (x) - \tilde \gamma_\varepsilon(y)|^2} - \frac 1 {|x-y|^2} \right) dx dy
  \\ &\geq \int_{\mathbb R \setminus B_R(0)} \int_{\mathbb R} \left( \frac 1 {|\tilde \gamma_\varepsilon (x) - \tilde \gamma_\varepsilon(x+w)|^2} - \frac 1 {|w|^2} \right) dw dx
  \\ & \geq \frac 1 2 \int_{\mathbb R \setminus B_R(0)} \int_{\mathbb R} \frac { \int_0^1 \int_0^1 | \tilde \gamma'_\varepsilon (x+t_1 w) - \tilde \gamma'_\varepsilon (x+t_2 w) |^2 dt_1 dt_2 }{|w|^2} dw dx
  \\ & \geq \frac c2 \iint_{\mathbb R^2 \setminus (B_R)^2} \frac {|\tilde \gamma'_\varepsilon (x) - \tilde \gamma'_\varepsilon (y)|^2}{|x-y|^2} dx dy.
 \end{aligned}
 \end{equation}
 Estimates \eqref{eq:noConcentration3} and \eqref{eq:noConcentration2} imply \eqref{eq:noConcentrationInfinity}.

 We will now deduce that 
 \begin{equation} \label{eq:localConvergence}
   \lim_{\varepsilon \rightarrow0 }\int_{B_R} \int_{B_R} \frac{|(\tilde \gamma_\varepsilon'(x) - \tilde \gamma'(x)) - (\tilde \gamma_\varepsilon'(y) - \tilde \gamma'(y))|^2 } {|x-y|^2} dx dy =0
 \end{equation}
 again using Vitali's theorem. As noted before, we know that the integrand converges pointwise almost everywhere to $0$.
 
 To show uniform integrability of the integrands we use the estimate
 \begin{equation*}
  \frac{|\tilde \gamma_\varepsilon'(x) - \tilde \gamma'(x)) - (\tilde \gamma_\varepsilon'(y) - \tilde \gamma'(y)|^2 } {|x-y|^2}
  \leq \frac{|\tilde \gamma_\varepsilon'(x)  - \tilde \gamma_\varepsilon'(y)|^2 } {|x-y|^2} + \frac{|\tilde \gamma'(x)  - \tilde \gamma'(y)|^2 } {|x-y|^2}.
 \end{equation*}
Of course, one only has to show that the first summand in uniformly integrable.  
 Let $s(x)$ be the re-parametrization that satisfies $\tilde \gamma_\varepsilon (x) = I \circ \gamma_\varepsilon \circ s$ and $\delta >0$ be such that $(-R,R) \subset (I \circ \gamma_{\varepsilon}) ((\mathbb R / \mathbb Z) \setminus B_{\delta (0)})$ and $\psi(x,y) := (s(x), s(y)).$ As in the proof of Theorem~\ref{thm:ApproximationArcLength} we get for $E \subset (-R,R) ^2$
\begin{align*}
 \iint_{E} \frac {|\tilde \gamma'_\varepsilon(x) - \tilde \gamma_\varepsilon'(y)|^2}{|x-y|^2} dx dy & \leq C \iint_{\psi^{-1}(E)} \frac {| \gamma'_\varepsilon(x) - \gamma_\varepsilon'(y)|^2}{|x-y|^2} |\gamma_\varepsilon'(x)| |\gamma_\varepsilon'(y) |dx dy \\
 & \leq C \iint_{\psi^{-1}(E)} \frac {| \gamma'_\varepsilon(x) -  \gamma'_\varepsilon(y)|^2}{|x-y|^2}|dx dy.
\end{align*}
Since the integrands
\begin{equation*}
  \frac {| \gamma'_\varepsilon(x) -  \gamma_\varepsilon'(y)|^2}{|x-y|^2}
\end{equation*}
are uniformly integrable for every $\varepsilon_0>0$ there is an $\delta >0$ such that
$|\psi^{-1}(E)|\leq \delta$ implies
\begin{equation*}
 \iint_{E} \frac {|\tilde \gamma'_\varepsilon(x) - \tilde \gamma_\varepsilon'(y)|^2}{|x-y|^2} dx dy \leq \varepsilon.
\end{equation*}
But as $\psi^{-1}$ is a Lipschitz mapping, we get that there is an $\tilde \delta >0$ such that $|E| \leq \tilde \delta$ implies $|\psi^{-1}(E)|$ and hence
\begin{equation*}
 \iint_{E} \frac {|\tilde \gamma'_\varepsilon(x) - \tilde \gamma_\varepsilon'(y)|^2}{|x-y|^2} dx dy \leq \varepsilon.
\end{equation*}
Hence, Vitali's theorem implies that
  \begin{equation*}
 \lim_{\varepsilon \rightarrow 0} \int_{B_R} \int_{B_R} \frac{|(\gamma_\varepsilon'(x) - \tilde \gamma'(x)) - (\gamma_\varepsilon'(y) - \tilde \gamma'(y))|^2 } {|x-y|^2} dx dy =0
 \end{equation*}
 
 Let us now conclude the proof of the claim. For $\delta>0$ we first use \eqref{eq:noConcentrationInfinity} to get an $R>0$ such that 
  \begin{align*}
  \iint_{\mathbb R^2 \setminus (B_R)^2} \frac {|\tilde \gamma'_\varepsilon (x) - \tilde \gamma'_\varepsilon (y)|^2}{|x-y|^2} dx dy \leq \delta
 \end{align*}
 for all $\varepsilon >0$ small enough. 
 Then \eqref{eq:localConvergence} implies
 \begin{align*}
  \limsup_{\varepsilon \rightarrow 0} \lfloor \tilde \gamma'_\varepsilon - \tilde \gamma'\rfloor^2_{W^{\frac 1 2,2}} 
  & = \limsup_{\varepsilon \rightarrow 0} \iint_{\mathbb R^2 \setminus (B_R)^2} \frac {|\tilde \gamma'_\varepsilon (x) - \tilde \gamma'_\varepsilon (y)|^2}{|x-y|^2} dx dy
  + \iint_{B_{R^2}} \frac {|\tilde \gamma'_\varepsilon (x) - \tilde \gamma'_\varepsilon (y)|^2}{|x-y|^2} dx dy  \\ & \leq \delta.
 \end{align*}
 and thus
 \begin{equation*}
  \lim_{\varepsilon \rightarrow 0} \lfloor \tilde \gamma'_\varepsilon - \tilde \gamma'\rfloor_{W^{\frac 1 2,2}}  =0.
 \end{equation*}
\end{proof}

With the help of Theorem\ref{thm:MoebiusInvariance}, we can now also discuss the case of equality in Theorem~\ref{thm:ConjectureNagasawa} to get the following extension of Corollary~4.1 in \cite{Ishizeki2015}.

\begin{theorem} \label{thm:equality}
  We have $E^1 (\gamma) \geq 2 \pi^2$ for all regular curves $\gamma \in W^{\frac 3 2 ,2}(\mathbb R / \mathbb Z , \mathbb R^3)$ with equality if and only if $\gamma$ is a circle.
\end{theorem}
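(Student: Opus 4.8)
The plan is to reduce the equality characterization to two ingredients that are now available: the non-negativity of $E^1$ and the full Möbius invariance from Theorem~\ref{thm:MoebiusInvariance}. The inequality $E^1(\gamma)\geq 2\pi^2$ is exactly Theorem~\ref{thm:ConjectureNagasawa}, so only the characterization of the equality case has to be proved, and I would split it into the two implications ``circle $\Rightarrow$ equality'' and ``equality $\Rightarrow$ circle''.

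For the converse direction it suffices to evaluate $E^1$ on a round circle of length one, say $\gamma(x)=\frac1{2\pi}(\cos 2\pi x,\sin 2\pi x,0)$. A direct computation using $|\tau(x)-\tau(y)|^2=4\sin^2(\pi(x-y))$ and $|\gamma(x)-\gamma(y)|^2=\frac{1}{4\pi^2}\,4\sin^2(\pi(x-y))$ shows that the integrand of $E^1$ is the constant $2\pi^2$, so $E^1=2\pi^2$. At the same time I would record that $E^1$ is manifestly non-negative, since its integrand $\frac{|\tau(x)-\tau(y)|^2}{2|\gamma(x)-\gamma(y)|^2}|\gamma'(x)||\gamma'(y)|$ is pointwise non-negative and, by Theorem~\ref{thm:EnergySpace}, the denominator is bounded away from $0$ off the diagonal because $\gamma$ is bi-Lipschitz.

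For the forward direction I would assume $E^1(\gamma)=2\pi^2$, choose an inversion $I$ on a sphere centered at a point $\gamma(t_0)$ of the curve, and let $\tilde\gamma\colon\mathbb R\to\mathbb R^3$ be the arc-length reparametrization of $I\circ\gamma$ from the proof of Theorem~\ref{thm:MoebiusInvariance}. Since $E^1$ is a reparametrization-invariant geometric quantity, that theorem yields $E^1(\tilde\gamma)=E^1(\gamma)-2\pi^2=0$. The curve $\tilde\gamma$ is still bi-Lipschitz, having finite Möbius energy $E_{\text{m\"ob}}(\gamma)-4$, so the denominator in the integrand of $E^1(\tilde\gamma)$ is positive for almost every pair. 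The vanishing of the non-negative integral therefore forces $|\tilde\tau(x)-\tilde\tau(y)|=0$ for almost every $(x,y)\in\mathbb R^2$, and by Fubini the unit tangent $\tilde\tau=\tilde\gamma'$ equals a fixed vector almost everywhere; hence $\tilde\gamma$ parametrizes a straight line. Translating back, $\gamma=I\circ\tilde\gamma$ is the inversive image of a line whose point at infinity maps to the center $\gamma(t_0)\in\gamma$; as $\gamma$ is closed and compact the line cannot pass through the center, and the image of such a line under $I$ is a circle through the center. Thus $\gamma$ is a circle.

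\textbf{The main obstacle} is the bookkeeping at the point at infinity: one has to make sure that $E^1(\tilde\gamma)$, which is now an integral over $\mathbb R^2$ for the open curve $\tilde\gamma$, is the quantity actually controlled by Theorem~\ref{thm:MoebiusInvariance}, and that the no-concentration estimate \eqref{eq:noConcentrationInfinity} together with Claim~\ref{claim} justifies reading off the vanishing of the integrand after the inversion. Once this transfer is granted, the remaining rigidity step is purely algebraic and geometric, and I expect no further difficulty; the argument is moreover dimension-independent, so the restriction to $\mathbb R^3$ is only inherited from the statement of Ishizeki and Nagasawa's Corollary~4.1.
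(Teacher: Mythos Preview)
Your proposal is correct and follows precisely the route the paper indicates: the paper omits the proof and states that it is literally the argument of Corollary~4.1 in \cite{Ishizeki2015}, with Theorem~\ref{thm:MoebiusInvariance} replacing their $C^{1,1}$ invariance result, and this is exactly what you do---invert at a point of the curve, use $E^1(I\circ\gamma)=E^1(\gamma)-2\pi^2=0$, force the tangent to be constant from the non-negative integrand, and pull the resulting line back to a circle. The obstacle you flag concerning the meaning of $E^1(\tilde\gamma)$ as an integral over $\mathbb R^2$ is not an additional difficulty: the proof of Theorem~\ref{thm:MoebiusInvariance} in the paper already works with the arc-length parametrization $\tilde\gamma:\mathbb R\to\mathbb R^n$ and establishes the identity $E^1(\tilde\gamma)+2\pi^2=E^1(\gamma)$ for that very integral, so you may simply quote it.
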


We omit the proof of Theorem~\ref{thm:equality} as it is literally the same as the proof of Corollary~4.1 in \cite{Ishizeki2015}  where one only uses Theorem~\ref{thm:MoebiusInvariance} instead of Theorem~1.2 in \cite{Ishizeki2015}.

\subsection{$\Gamma$-convergence of the Discrete M\"obius Energies of Scholtes}

Let us extend the $\Gamma$-convergence result of Scholtes in \cite{Scholtes2014}. Scholtes introduced the discretized M\"obius energy
\begin{equation*}
 E_n (p) = \sum_{i,j =1}^m \left(\frac 1 {|p(a_i) - p(a_j)|^2}  - \frac 1 {d_p(a_i,a_j)^2}\right) d_p(a_{i+1},a_i) d_p(a_{j+1},a_j).
\end{equation*}
of a polygon $p:\mathbb R/ \mathbb Z \rightarrow \mathbb R^n$ with vertices $p(a_i)$, $a_i \in [0,1)$, $i=1,\ldots, m$.

\begin{theorem} [$\Gamma$-convergence of discrete M\"obius energies]
 Let $q \in [1,\infty)$. We have
 $$
  E_n \xrightarrow{\Gamma} E_{\text{m\"ob}} 
 $$
 on the space of curves $C^{0,1}(\mathbb R / \mathbb Z, \mathbb R^n)$ of unit velocity equipped with the $L^q$ and $W^{1,q}$-norm.
\end{theorem}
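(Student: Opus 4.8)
The plan is to establish the two halves of the $\Gamma$-convergence separately. Writing $F' := \Gamma\text{-}\liminf_n E_n$ and $F'' := \Gamma\text{-}\limsup_n E_n$, it suffices to prove $E_{\text{m\"ob}} \le F'$ and $F'' \le E_{\text{m\"ob}}$, since $F' \le F''$ always holds; this forces $F' = F'' = E_{\text{m\"ob}}$. The key simplification I would exploit from the start is that we work only with unit-velocity curves: for such a curve $d_\gamma(x,y)$ is just the cyclic parameter distance, and for a unit-velocity polygon $d_p(a_i,a_j)$ is the cyclic distance of the parameters. Thus $E_n(p)$ is literally a double Riemann sum for $E_{\text{m\"ob}}$, with the mesh weights $d_p(a_{i+1},a_i)=a_{i+1}-a_i$ fixed by the parametrization rather than by the geometry of $p$.

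For the liminf inequality I would argue directly and in the weaker $L^q$ topology, so that the $W^{1,q}$ case follows a fortiori (a stronger topology only restricts the admissible sequences). Let $p_n \to \gamma$ in $L^q$. Since the $p_n$ are unit-velocity they are uniformly $1$-Lipschitz, so Arzel\`a--Ascoli upgrades $L^q$-convergence to uniform convergence $p_n \to \gamma$. Hence the piecewise-constant integrand of $E_n(p_n)$ converges pointwise a.e.\ off the diagonal to the integrand of $E_{\text{m\"ob}}(\gamma)$. Because chords are shorter than arcs, every one of these integrands is nonnegative, so Fatou's lemma yields $\liminf_n E_n(p_n) \ge E_{\text{m\"ob}}(\gamma)$, and it automatically produces the value $+\infty$ when $E_{\text{m\"ob}}(\gamma)=\infty$. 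This part is in essence the argument of Scholtes \cite{Scholtes2014} and requires no new input.

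For the recovery (limsup) inequality $F'' \le E_{\text{m\"ob}}$ there is nothing to prove when $E_{\text{m\"ob}}(\gamma)=\infty$, so let $\gamma$ have finite energy. Here I would combine Scholtes' construction for smooth curves with the approximation machinery of this paper via the soft lower semicontinuity of the $\Gamma$-upper limit. By Theorem~\ref{thm:ApproximationArcLength} the arc-length reparametrizations $\tilde\gamma_\varepsilon$ of the normalized mollifications are smooth unit-velocity curves with $\tilde\gamma_\varepsilon \to \gamma$ in $W^{\frac 32,2}$, hence in $W^{1,q}$ and $L^q$ for every $q\in[1,\infty)$; and by scaling and reparametrization invariance of the M\"obius energy together with Lemma~\ref{lem:ConvOfMoebiusEnergy} one has $E_{\text{m\"ob}}(\tilde\gamma_\varepsilon)=E_{\text{m\"ob}}(\gamma_\varepsilon)\to E_{\text{m\"ob}}(\gamma)$. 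For each smooth $\tilde\gamma_\varepsilon$, Scholtes \cite{Scholtes2014} supplies a recovery sequence of inscribed polygons (rescaled to unit velocity), so $F''(\tilde\gamma_\varepsilon)\le E_{\text{m\"ob}}(\tilde\gamma_\varepsilon)$. Letting $\varepsilon\to 0$ and using that $F''$ is lower semicontinuous gives
$$
 F''(\gamma) \le \liminf_{\varepsilon \to 0} F''(\tilde\gamma_\varepsilon) \le \liminf_{\varepsilon \to 0} E_{\text{m\"ob}}(\tilde\gamma_\varepsilon) = E_{\text{m\"ob}}(\gamma),
$$
which is the desired bound in both topologies.

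The step I expect to be the genuine obstacle is not Scholtes' polygonal construction but the \emph{density input} feeding the recovery side: one needs that an arbitrary finite-energy curve is approximable by smooth unit-velocity curves with \emph{simultaneous} convergence of the curve in $W^{1,q}/L^q$ and of the energy. This is exactly what the earlier results deliver -- Theorem~\ref{thm:ApproximatingVMOCurves} makes the mollifications regular, Theorem~\ref{thm:ApproximationArcLength} restores the unit-velocity normalization without losing $W^{\frac 32,2}$-convergence, and Lemma~\ref{lem:ConvOfMoebiusEnergy} gives energy convergence -- so what is classically the delicate part collapses to assembling these together with the lower semicontinuity of the $\Gamma$-upper limit. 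The only remaining technical point, the singular diagonal in the Fatou step of the liminf inequality, is neutralized by the nonnegativity of the integrands noted above.
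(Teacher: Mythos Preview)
Your proposal is correct and follows the same route as the paper: both defer the $\liminf$ inequality to Scholtes and obtain the $\limsup$ inequality by combining Scholtes' recovery construction for smooth curves with the approximation package of this paper (Theorem~\ref{thm:ApproximatingVMOCurves}, Theorem~\ref{thm:ApproximationArcLength}, Lemma~\ref{lem:ConvOfMoebiusEnergy}). The only cosmetic difference is that the paper runs an explicit diagonal selection along $\varepsilon=1/m$, whereas you invoke the lower semicontinuity of the $\Gamma$-upper limit $F''$; these are equivalent devices, and your formulation is arguably the cleaner packaging.
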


Scholtes proved this theorem for curves which are in $C^1$ -- which again is not implied by bounded M\"obius energy.

\begin{proof}
 Since the $\liminf$-inequality was already shown by Scholtes, we only have to proof the $\limsup$ inequality. Scholtes has already shown that the $\limsup$ inequality holds for $C^1$ curves. If now $\gamma$ is a regular curves with bounded M\"obius energy, we can consider the smoothened curves $\gamma_\varepsilon = \gamma \ast \eta_\varepsilon$. By Lemma~\ref{lem:ConvOfMoebiusEnergy} we have
 $\lim_{m \rightarrow \infty} E_{\text{m\"ob}}(\gamma_{\frac 1 m}) = E_{\text{m\"ob}}(\gamma).$ By the $\limsup$-inequality of Scholtes, we can find $\tilde \gamma_m$ with $E_m(\gamma_\frac 1 m) \geq E(\gamma_{\frac 1 m}) - \frac 1 m$. Hence,
 $$
  \limsup_{m\rightarrow \infty} E_m(\gamma_{\frac 1m}) \geq \lim_{m \rightarrow \infty} \left( E_{\text{m\"ob}}(\gamma_{\frac 1 m}) - \frac 1 m \right) = E_{\text{m\"ob}}(\gamma).
 $$
 We now let $ \tilde \gamma_m$ denote the re-parametrization of the curve $\frac 1 {L(\gamma_{\frac 1m})} \gamma_{\frac 1m}$ by arc-length that does not change the point $0$.
 Then by Theorem~\ref{thm:ApproximationArcLength}, 
 $$
   \tilde \gamma_m \rightarrow \gamma \text{ in } W^{\frac 3 2,2} (\mathbb R / \mathbb Z, \mathbb R^n)
 $$
 and still
 $$
  \limsup_{m\rightarrow \infty} E_m(\tilde \gamma_{m}) = \limsup_{m\rightarrow \infty} E_m(\gamma_{\frac 1m}) \geq \lim_{m \rightarrow \infty} \left( E_{\text{m\"ob}}(\gamma_{\frac 1 m}) - \frac 1 m \right) = E_{\text{m\"ob}}(\gamma).
 $$
\end{proof}

\subsection{Inscribing Equilateral polygons}

With the tools we have at hand, we can also extend a result of Wu \cite{Wu2004} on inscribed regular polygons in the following way
\begin{theorem}
 Let $\gamma \in C^{0,1}(\mathbb R / \mathbb Z, \mathbb R^d)$ be a regular curve with $\gamma' \in VMO$. Then for every $n\in \mathbb N$, $n \geq 2$ and any $x_0 \in \mathbb R / \mathbb Z$ there is an inscribed $n$-gon starting with the point $\gamma(x_0).$  
\end{theorem}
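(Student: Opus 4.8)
\section*{Proof proposal}

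The plan is to reduce the statement to Wu's theorem for smooth curves \cite{Wu2004} by means of the mollification introduced in this article. Set $\gamma_\varepsilon = \gamma \ast \eta_\varepsilon$. By Theorem~\ref{thm:ApproximatingVMOCurves} the curves $\gamma_\varepsilon$ are smooth and regular for every sufficiently small $\varepsilon>0$, they converge to $\gamma$ uniformly, and $|\gamma_\varepsilon'| \to 1$ uniformly; in particular $\gamma_\varepsilon(x_0) \to \gamma(x_0)$. For each such $\varepsilon$, Wu's result applied to the smooth regular curve $\gamma_\varepsilon$ and the base point $\gamma_\varepsilon(x_0)$ produces an inscribed equilateral $n$-gon, that is, parameters $x_0 = t_0^\varepsilon < t_1^\varepsilon < \dots < t_{n-1}^\varepsilon < t_0^\varepsilon + 1$ whose consecutive chords all share one common length $\ell_\varepsilon = |\gamma_\varepsilon(t_{i+1}^\varepsilon) - \gamma_\varepsilon(t_i^\varepsilon)|$ (indices taken modulo $n$).

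Next I would pass to the limit $\varepsilon \to 0$. The vectors $(t_1^\varepsilon, \dots, t_{n-1}^\varepsilon)$ lie in the compact ordered simplex $\{\, x_0 \le t_1 \le \dots \le t_{n-1} \le x_0 + 1 \,\}$, so after passing to a subsequence we may assume $t_i^\varepsilon \to t_i$ and $\ell_\varepsilon \to \ell$. Since $\gamma_\varepsilon \to \gamma$ uniformly, the limiting parameters satisfy $t_0 = x_0$ and $|\gamma(t_{i+1}) - \gamma(t_i)| = \ell$ for all $i$ modulo $n$. Thus $\gamma(t_0), \dots, \gamma(t_{n-1})$ is an equilateral configuration inscribed in $\gamma$ with $\gamma(t_0) = \gamma(x_0)$, and the only thing that remains is to exclude that this configuration collapses.

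The main obstacle is precisely this nondegeneracy, i.e.\ a uniform lower bound $\liminf_{\varepsilon \to 0} \ell_\varepsilon > 0$. A priori the $t_i^\varepsilon$ could all converge to $x_0$ (with $n-1$ of the parameter gaps shrinking to $0$ while a single gap tends to $1$), in which case $\ell = 0$ and the $n$-gon degenerates to the point $\gamma(x_0)$. I expect the delicate point to be that this bound cannot be obtained from local almost-straightness of $\gamma_\varepsilon$: the natural curvature scale of $\gamma_\varepsilon$ degenerates as $\varepsilon \to 0$ because $\gamma$ is only of class $W^{\frac 32,2}$, and moreover a \emph{collinear} limiting configuration is perfectly compatible with the equal-chord equations, so ruling out collapse is not a formal consequence of the limiting relations alone. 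The non-collapse must therefore be enforced by a robust feature of Wu's construction, namely that his $n$-gon winds essentially once around the curve, combined with a geometric estimate that survives the passage to the limit. Concretely, since the vertices wind around once, at least one parameter gap is $\ge \tfrac 1n$; in the embedded case relevant to curves of bounded M\"obius energy one can turn this macroscopic parameter separation into a chord bound using uniform bi-Lipschitz estimates for the $\gamma_\varepsilon$ (Theorem~\ref{thm:EnergySpace} together with Theorem~\ref{thm:ApproximatingVMOCurves}), while in the general $VMO$ case one argues by contradiction, blowing up at the scale $\ell_\varepsilon$ around $x_0$ and using the essential winding of Wu's polygons to contradict the collinear/degenerate blow-up limit forced by differentiability of $\gamma$ in the $VMO$ sense. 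Once this uniform lower bound is secured, $\ell > 0$ forces the limiting parameters $t_0 < t_1 < \dots < t_{n-1}$ to be genuinely distinct and to wind around once, so that $\gamma(t_0), \dots, \gamma(t_{n-1})$ is the desired inscribed $n$-gon starting at $\gamma(x_0)$; the case $n=2$ being trivial.
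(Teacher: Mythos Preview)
Your overall strategy coincides with the paper's: mollify, apply Wu's theorem to the smooth curves $\gamma_\varepsilon$, extract a subsequential limit of the inscribed equilateral $n$-gons, and argue that the limit does not collapse. The only substantive issue is the non-collapse step, and here your proposal and the paper diverge.

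You try to prevent collapse by invoking a structural feature of Wu's construction (that his polygon ``winds once''), turning this into a macroscopic parameter gap $\ge \tfrac 1n$, and then arguing either via bi-Lipschitz estimates (which only covers the finite-M\"obius-energy subcase, not the stated $VMO$ hypothesis) or via a blow-up that you leave as a sketch. This is not wrong in spirit, but it is both more fragile---it depends on a property of Wu's specific construction rather than just on the conclusion ``there is an inscribed equilateral $n$-gon''---and, in the general $VMO$ case, not actually carried out.

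The paper's device is cleaner and avoids these issues: it uses that the Gromov distortion of an equilateral $n$-gon has a universal lower bound $c_n>1$ (the infimum over equilateral $n$-gons is attained and hence positive). If the inscribed polygons $p_k$ in $\gamma_{1/k}$ had $\diam p_k\to 0$, this would force the local distortion of $\gamma_{1/k}$ at scale $\diam p_k$ to stay bounded away from $1$. On the other hand, the $VMO$ hypothesis gives a \emph{uniform-in-$\varepsilon$} small-scale flatness estimate: the mean-oscillation bound $VMO(r)$ for $\gamma'$ transfers to $\gamma_\varepsilon'$ with the same modulus, so the chord-to-arclength defect of $\gamma_\varepsilon$ on arcs of length $\le r$ tends to $0$ as $r\to 0$, uniformly in $\varepsilon$. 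These two facts are incompatible, and collapse is excluded. This argument uses nothing about \emph{how} Wu produces the polygon---only that it is equilateral---and it handles the $VMO$ case directly, without a separate embedded/bi-Lipschitz subcase. Your blow-up idea is morally the same contradiction (nearly straight small arcs cannot contain a genuinely bent equilateral $n$-gon), but the distortion formulation makes it a two-line computation rather than a rescaling argument.
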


The proof is based on the fact, that there is a lower bound $c_n >0$ of the Gromov distortion of equilateral $n$-gons as the infimum of the Gromov distortion is attained for an equilateral $n$-gon and thus cannot be $0$.

\begin{proof}
 Let $\gamma_\varepsilon = \eta_\varepsilon \ast \gamma$ be the standard mollified curves and $p_k$ the inscribed, equilateral $n$-gon with point $\gamma_{\frac 1k} (x_0)$ for $k$ so large that $\gamma_{\frac 1k}$ is a regular curve. We first note that $\inf_{k \in \mathbb N} \diam p_k =0$ would imply 
 \begin{equation} \label{eq:LowerBoundDistortion}
  \sup_{x\not= y \in \mathbb R / \mathbb Z, |x-y| \leq \diam p_k } \frac {|\gamma_{\frac 1k }(x) - \gamma_{\frac 1k }(y)|} {d_{\gamma_{ \frac 1 k}}(x,y)}\geq c_n >0
 \end{equation}
 where $c_n$ is a lower bound of the Gromov distortion of a equilateral $n$-gon. 
 
 If we set $$VMO(r) =\sup_{x \in \mathbb R / \mathbb Z} \frac 1 {2r} \int_{B_r(x)} |\gamma'(y) - \overline{\gamma'}_{B_r(x)}| dy,$$ then on the other hand we have
  \begin{align*}
 \frac 1 {2r}  \int_{B_r(x)}| \gamma'_\varepsilon - \overline{\left(\gamma'_\varepsilon \right)}_{B_r(x)}  | dy 
 &= \frac 1 {2r} \int_{B_r(x)} \left|  \int_ {B_\varepsilon(0)} \left(\gamma'(y+z) - \overline{\left(\gamma'_\varepsilon \right)}_{B_r(x+z)}\right) \eta_\varepsilon(z) dz  \right| dy \\
 &\leq VMO(r) \rightarrow 0
 \end{align*} 
 as $r\rightarrow 0$ since $\gamma'$ belongs to $VMO$. Hence, 
 $$
  \sup_{x\not= y \in \mathbb R / \mathbb Z, |x-y| \leq r } \frac {|\gamma_{\frac 1 n}(x) - \gamma_{\frac 1n}(y)|}{d_\gamma(x,y)} \rightarrow 0
 $$
 as $r \rightarrow 0$. This contradicts inequality \eqref{eq:LowerBoundDistortion}.
\end{proof}

\end{document}